\newtheorem{theorem}{Theorem}
\newtheorem{lemma}{Lemma}
\newtheorem{corollary}{Corollary}
\newtheorem{definition}{Definition}
\newtheorem*{RPMI}{The Riemannian Positive Mass Inequality}
\newtheorem*{RPI}{The Riemannian Penrose Inequality}
\newtheorem*{RZASI}{The Riemannian ZAS Inequality}
\newtheorem*{CC}{The Conformal Conjecture}
\newcommand{\real}{{\mathbb R}}
\newcommand{\ol}{\overline}
\date{October 23, 2010}
\title{
%\ \\[0.4cm] \ \\
\bf On the Positive Mass, Penrose, and ZAS Inequalities in General Dimension}
\author{Hubert L. Bray\footnote{Mathematics Department, Duke University, Box 90320, Durham, NC 27708, USA, bray@math.duke.edu.  Supported in part by NSF grants DMS-0706794 and DMS-1007063.}}
\begin{document}
%-------------------

\maketitle

%------------------------

\thispagestyle{empty}

%--------------------------------------
\begin{abstract}
%\vskip 3mm
\footnotesize{

%\vskip 4.5mm
\noindent After a detailed introduction including new
examples, we give an exposition focusing on the Riemannian cases of
the positive mass, Penrose, and ZAS inequalities of general
relativity, in general dimension.

\vspace*{2mm} \noindent{\bf 2000 Mathematics
Subject Classification:
 53C80, 53C24, 53C44.}

\vspace*{2mm} \noindent{\bf Keywords and Phrases: }General
relativity, scalar curvature, black holes, minimal surfaces, zero
area singularities, ZAS, positive mass inequality, Penrose inequality, ZAS inequality.}

\end{abstract}

\section{Dedication}

It is an honor and a pleasure to contribute a paper to this volume
celebrating Rick Schoen's 60th birthday (which doesn't seem possible
since Rick is so much better at every sport than the author). Rick's
contribution to mathematics continues to be tremendous and is
growing at an impressive rate, including his original research,
collaborations with others, and the many students whom he has so
expertly supervised.  As successful as he has been for so many
years, his continuing hard work and dedication reveals his genuine
passion and love for mathematics. He is a role model for us all.

The ideas and directions discussed in this paper were in large part
inspired by the positive mass inequality, proved in dimensions less
than eight in Rick's famous joint works with S.-T. Yau
\cite{PMT1,PMT2,SchoenYauCompact,SchoenVariational}. The positive mass inequality is a beautiful
geometric statement about scalar curvature which can equivalently be
understood as the fundamental result in general relativity that
positive energy densities in a spacetime imply that the total mass
of the spacetime is also positive.  In this paper we will discuss
the Riemannian cases of the positive mass inequality, the Penrose
inequality, and the ZAS (zero area singularity) inequality,
describing how they are closely related in section \ref{sec:trio}.
We apologize in advance for not making this a comprehensive survey of every interesting
result in these areas, although we do recommend \cite{Mars} as a very nice
survey of the Penrose inequality.
But first, in section \ref{sec:intro} we begin with a mixture of
well known facts and interesting new examples to motivate our later
discussion.

\section{Introduction}\label{sec:intro}

In this section we describe some of the ideas, motivations, and
definitions that are central to general relativity, in general
dimension.  We also discuss intuition and include some new examples
due to Lam \cite{LamThesis, LamPaper} of manifolds where the
Riemannian positive mass inequality and the Riemannian Penrose
inequality can be proved directly.

Broadly defined, general relativity encompasses any description of
the universe as a smooth manifold with a Lorentzian metric, called a
spacetime, of signature $(1,n)$, where usually $n = 3$. Then the
Einstein equation
\begin{equation}\label{eqn:EE}
   G = (n-1)\omega_{n-1} T
\end{equation}
can be taken to be the definition of the stress energy tensor $T$,
where $\omega_{n-1}$ is the measure of the unit $(n-1)$ sphere.
Thus, in dimension three, $G = 8\pi T$.  The Einstein curvature
tensor $G = \mbox{Ric} - \frac12 R \cdot g$ has zero divergence by
the second Bianchi identity. Furthermore, $T(\nu_1,\nu_2)$ is
defined to be the amount of energy traveling in the unit direction
$\nu_1$ as observed by someone going in the unit direction $\nu_2$.
We comment that the Einstein equation can be derived from an action
principle based on the Einstein-Hilbert action in the vacuum case,
but we will take the Einstein equation as our starting point. The
zero divergence property of $G$ (which is a consequence of the
action principle) may be interpreted as a local conservation
property for the stress energy tensor $T$.

\subsection{The Schwarzschild Spacetimes}\label{sec:ss}

The vacuum case of general relativity is $G = 0$ which, for $n \ge
2$, implies that $\mbox{Ric} = 0$. Unlike Newtonian physics which is
trivial in the vacuum case, general relativity is highly nontrivial
in the vacuum case when $n \ge 3$. That is, there exist many
solutions to $G = 0$ other than the Minkowski spacetime metric
\begin{equation}
   -dt \otimes dt + dx^1 \otimes dx^1 + ... + dx^n \otimes dx^n,
\end{equation}
which include solutions describing gravitational waves which
propagate at the speed of light.  However, if we restrict our
attention to spacetimes which are static and spatially spherically
symmetric, then it is an exercise to show that for $n \ge 3$ there
is precisely a one parameter family of vacuum solutions, called the
Schwarzschild spacetimes.  Explicitly, these spacetimes are
\begin{equation}\label{eqn:Schwarz}
   - \left( \frac{1-\frac{k}{r^{n-2}}}{1+\frac{k}{r^{n-2}}} \right)^2 dt \otimes dt +
   \left(1+\frac{k}{r^{n-2}}\right)^{\frac{4}{n-2}}
   \left( dx^1 \otimes dx^1 + ... + dx^n \otimes dx^n \right),
\end{equation}
for $r > |k|^{1/(n-2)}$, where $r = \sqrt{(x^1)^2 + ... + (x^n)^2}$.

Note that $k = 0$ corresponds to the flat Minkowski spacetime.
Furthermore, it is an easy exercise to show that all of the $k > 0$
spacetimes are scalings of each other, as are all of the $k < 0$
spacetimes.  Hence, up to scalings, equation \ref{eqn:Schwarz}
defines three special geometries, namely when $k = 1, 0, -1$.  As we
will see, each of these geometries inspires interesting physical
statements.

For $k > 0$, the spacetime in equation \ref{eqn:Schwarz} represents
the region outside a static black hole in vacuum.  Note that there
is a coordinate chart singularity at $r = k^{1/(n-2)}$ since the
first metric term goes to zero.  However, this value of $r$ is
purely a coordinate chart singularity and not a true metric
singularity.  Kruskal coordinates \cite{ONeill} can be used to
understand the entire Schwarzschild spacetime.  The above choice of
coordinate chart is limited in that it does not cover the interior
region of the Schwarzschild spacetime which contains the true metric
singularity, but it is ideal for our purposes as a way of
visualizing the Schwarzschild spacetime as a perturbation of the
Minkowski spacetime.

For $k < 0$, the spacetime in equation \ref{eqn:Schwarz} represents
the region outside a static zero area singularity, called a ZAS
\cite{BrayJauregui}. Again, there is a singularity at $r =
|k|^{1/(n-2)}$, but this time the singularity is a true metric
singularity as the curvatures of the spacetime diverge to infinity
as one approaches $r = |k|^{1/(n-2)}$.  Note that while
topologically the singularity is a coordinate $(n-1)$-sphere, the
area of this sphere is zero. Hence, to an observer in the spacetime,
the ZAS looks like a point singularity in this case.

Gravity, which is not a force in general relativity but instead is a
perceived phenomenon, results from postulating that test particles
in free fall follow geodesics.  A curve in a spacetime with
coordinates $\gamma(s) = (\gamma^0(s), \gamma^1(s), ...,
\gamma^n(s))$ is a geodesic if it satisfies the geodesic equation
$\nabla_{\dot\gamma} \dot\gamma = 0$, which in coordinates is
\begin{equation}
   0 = \ddot{\gamma}^\alpha(s) + \dot\gamma^i(s) \dot\gamma^j(s)
   \Gamma_{ij}^\alpha
\end{equation}
for all $\alpha$, where we denote time as the zeroth coordinate and
differentiation by $s$ with a dot. Hence, if we consider a curve
with $\gamma(0) = (0,...,0,r)$ and $\dot\gamma(0) = (1,0,...,0)$
representing a test particle a distance $r$ away from the origin in
the coordinate chart along the $n$th axis momentarily ``at rest''
going purely in the time direction, it follows from direct
calculation that $\ddot\gamma(0) = (0,...,0,-a)$, where $a =
\Gamma_{00}^n$ represents inward acceleration in the coordinate
chart, and
\begin{equation}
   \lim_{r \rightarrow \infty} a r^{n-1} = 2k(n-2)
\end{equation}
for the Schwarzschild spacetime. Hence, in the limit as $r$ goes to
infinity, the geodesic accelerates in the coordinate chart towards
the origin according to a $\frac{2k(n-2)}{r^{n-1}}$ inward
acceleration law.

Now suppose we generalize Newtonian gravity to $n$ dimensions by
defining the potential energy of a test particle divided by the mass
of the test particle a distance $r$ from a point mass of mass $m$ to
be $-\frac{Gm}{r^{n-2}}$ (where here $G$ is the universal
gravitational constant). This corresponds to an inward acceleration
law of $\frac{Gm(n-2)}{r^{n-1}}$.  In the same way that setting the
speed of light to one equates units of length and time, setting
$G=1$ equates units of length to the power $n-2$ to mass.

In order to recover Newtonian dynamics from the Schwarzschild
spacetime in the limit as $r$ goes to infinity, we set
\begin{equation}
   \frac{2k(n-2)}{r^{n-1}} = \frac{Gm(n-2)}{r^{n-1}}
\end{equation}
to derive $2k = Gm$.  Thus, setting $G=1$ implies that $k = m/2$.
Hence, when our one parameter family of Schwarzschild spacetimes is
expressed as
\begin{equation}\label{eqn:SchwarzMassM}
   - \left( \frac{1-\frac{m}{2r^{n-2}}}{1+\frac{m}{2r^{n-2}}} \right)^2 dt \otimes dt +
   \left(1+\frac{m}{2r^{n-2}}\right)^{\frac{4}{n-2}}
   \left( dx^1 \otimes dx^1 + ... + dx^n \otimes dx^n \right),
\end{equation}
Newtonian dynamics is recovered by the behavior of nonrelativistic
geodesics in the limit as $r$ goes to infinity, where $m$ is the
total mass of the spacetime.

At first glance this result seems paradoxical since the
Schwarzschild spacetimes are indeed vacuum solutions to the Einstein
equation.  How can they have nonzero total mass if they represent
vacuum solutions?  When $m > 0$, the answer is that these spacetimes
represent static black holes with event horizons at $r =
(m/2)^{{1}/{(n-2)}}$ (``coincidentally'' where the coordinate chart
singularity is) which contribute a mass $m$ to the system. The more
matter and gravitational waves that fall into a black hole, the most
massive it becomes.  Thus, while this is not precise, metaphorically
one could think of the mass of a black hole as resulting from the
mass and energy density that it has already consumed.  Similarly,
when $m<0$, it is reasonable to think of the ZAS singularity at the
spatial origin as a singularity with negative mass.  However,
whereas the singularity of the Schwarzschild spacetime with $m>0$ is hidden
inside the event horizon of a black hole, when $m<0$ the singularity of the ZAS
is exposed for all observers to see.  The time evolution of a ZAS
has yet to be defined and it is not clear that it represents anything
physical. However, geometrically ZAS are quite interesting and are very natural
phenomena to study.

\subsection{The Schwarzschild Metrics}

We define the Schwarzschild metric to be the Riemannian manifold
isometric to the $t=0$ hypersurface of the Schwarzschild spacetime.
Explicitly, the Schwarzschild metric of mass $m$ in dimension $n \ge
3$ is
\begin{equation}\label{eqn:conformalform}
   \left( \real^n \backslash \bar{B},
   \left(1+\frac{m}{2r^{n-2}}\right)^{\frac{4}{n-2}}
   \left( dx^1 \otimes dx^1 + ... + dx^n \otimes dx^n \right) \right),
\end{equation}
where $\bar{B}$ is the closed ball of radius
$\max(0,-m/2)^{1/(n-2)}$. Technically we have extended this
Schwarzschild metric when $m>0$ since we are now allowing $r$ to go
all the way down to zero.  This is justified by the fact that for
$m>0$, the above Riemannian metric is isometric to the $t=0$ slice
of the Schwarzschild spacetime in Kruskal coordinates (which covers
both exterior and both interior regions of the Schwarzschild
spacetime) \cite{ONeill}.

Again, all of the $m>0$ Schwarzschild metrics are scalings of each
other, as are all of the $m<0$ Schwarzschild metrics.  Hence, up to
scalings, the Schwarzschild metrics define three important
geometries to study, namely when $m/2 = 1,0,-1$.  In particular,
these three geometries are the cases of equality of the Riemannian
Penrose inequality, the Riemannian positive mass inequality, and the
Riemannian ZAS inequality (this last one modulo an open geometric
conjecture), respectively, which we will discuss in detail later.

The conformal description of the Schwarzschild metrics given in
equation \ref{eqn:conformalform} are a great way to gain some
intuition about these metrics and, more generally, the notion of
total mass.  Recall that the parameter $m$ was chosen in our
discussion of the Schwarzschild spacetime metrics to be physically
compatible with our usual notion of total mass from Newtonian
physics by looking at the accelerations of geodesics with respect to
asymptotically flat coordinate charts.  Even though the time
dimension has been removed in the Schwarzschild metrics, the total
mass $m$ is still present as the coefficient of the
$\frac{1}{r^{n-2}}$ term in the expression of the metric.  Hence,
the total mass is directly related to the rate at which the
Schwarzschild metric becomes flat at infinity.

Another important geometric characterization of the Schwarzschild
metrics is that they are all of the spherically symmetric metrics
which have zero scalar curvature.  Hence, these metrics are important
for understanding scalar curvature.  As we will see later, scalar
curvature is proportional to local energy density for zero second
fundamental form slices of spacetimes. Hence, the fact that the
Schwarzschild spacetime is vacuum implies that the Schwarzschild
metric must have zero scalar curvature.  This is also easily
verified by direct calculation.

The Schwarzschild metrics of dimension $n$ have natural embeddings
into flat spaces of dimension $n+1$, which is useful for another
view of these geometries. When $m>0$, the $n$ dimensional
Schwarzschild metric may be conveniently viewed as a spherically
symmetric hypersurface of $n+1$ dimension Euclidean space (with the
usual positive definite Euclidean metric) \cite{BrayThesis}, as
shown in figure \ref{PositiveSchwarzschild} when $n=3$.  Based on
this, Greg Galloway noted that when $m<0$, one may do a Wick
rotation where $w$ is replaced by $iw$ to conclude that for $m<0$,
the $n$ dimensional Schwarzschild metric is isometric to a
spherically symmetric hypersurface of $n+1$ dimension Lorentzian
space, as shown in figure \ref{NegativeSchwarzschild} when $n=3$.

These two figures are qualitatively correct in higher dimensions as
well, although $R$ as a function of $w$ is not quite as explicit.
Instead, we comment that for $m>0$, $\left(\frac{dR}{dw}\right)^2 =
\frac{R^{n-2}}{2m} - 1$ gives the correct embedding in Euclidean
space, and for $m<0$, $\left(\frac{dR}{dw}\right)^2 =
\frac{R^{n-2}}{2|m|} + 1$ gives the correct embedding in Lorentzian
space, for $n \ge 3$.

\begin{figure}
   \begin{center}
   \includegraphics[width=120mm]{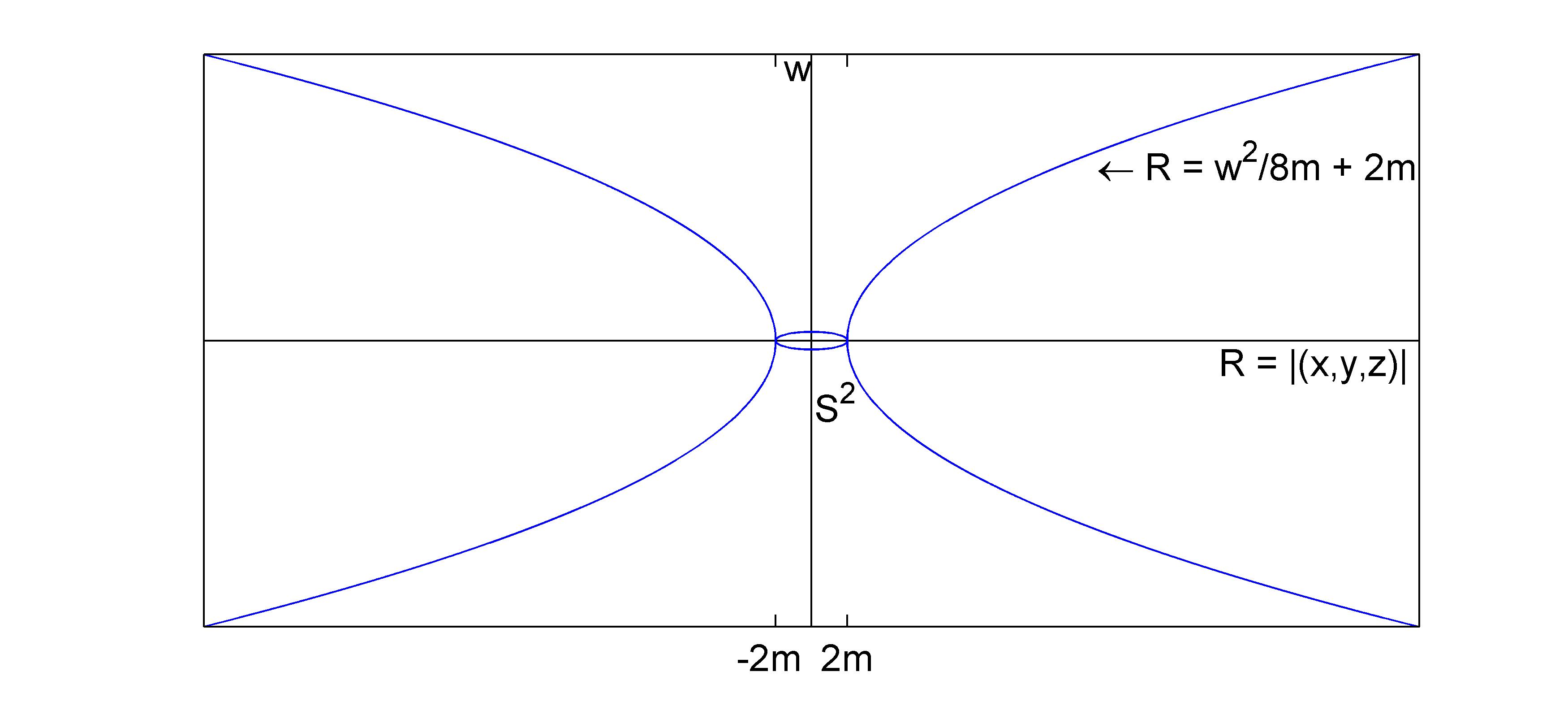}
   \end{center}
   \caption{The three dimensional Schwarzschild metric of mass $m>0$
   (in blue)
   viewed as a spherically symmetric submanifold of four dimensional
   Euclidean space \{(x,y,z,w)\} satisfying $R = w^2/8m + 2m$, where
   $R = \sqrt{x^2 + y^2 + z^2}$.\label{PositiveSchwarzschild}}
\end{figure}

\begin{figure}
   \begin{center}
   \includegraphics[width=120mm]{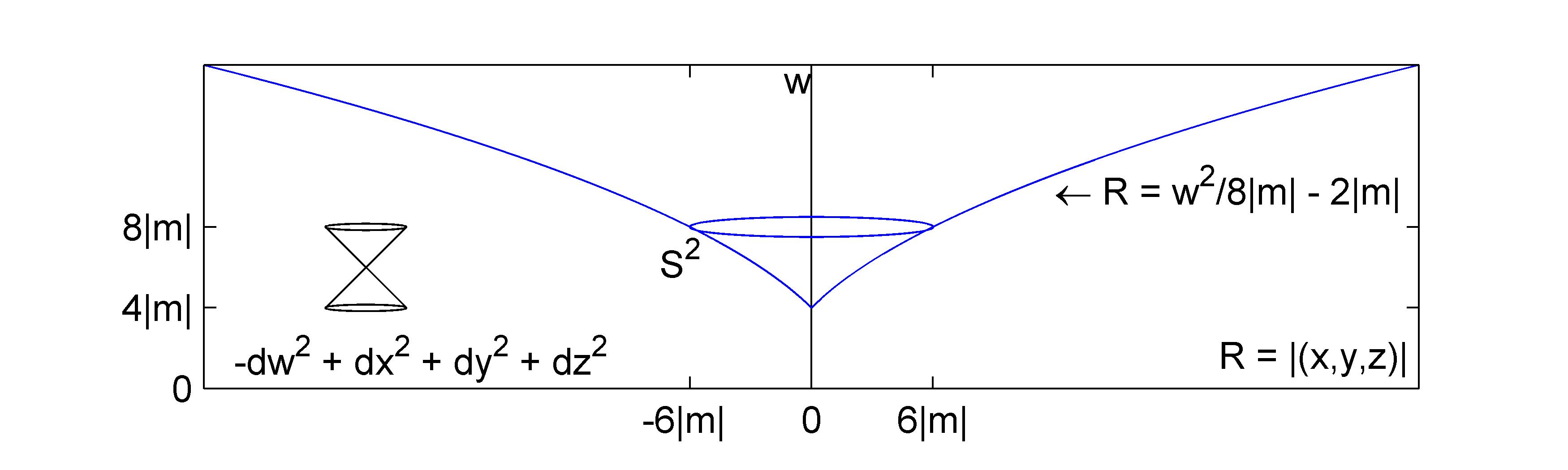}
   \end{center}
   \caption{The three dimensional Schwarzschild metric of mass $m<0$
   (in blue)
   viewed as a spherically symmetric submanifold of four dimensional
   Minkowski space \{(x,y,z,w)\} (with Lorentzian metric $-dw^2 + dx^2 + dy^2 + dz^2$)
   satisfying $R = w^2/8|m| - 2|m|$, where
   $R = \sqrt{x^2 + y^2 + z^2}$.\label{NegativeSchwarzschild}}
\end{figure}

When $m>0$, figure \ref{PositiveSchwarzschild} shows that the
Schwarzschild metric also has a $Z_2$ symmetry by reflecting through
the $w=0$ plane.  The set of fixed points of this symmetry is a
minimal $(n-1)$ sphere of $(n-1)$ volume $A = \omega_{n-1}
(2m)^{\frac{n-1}{n-2}}$, where $\omega_{n-1}$ is the measure of the
unit $(n-1)$ sphere.  Going back to the conformal description of the
Schwarzschild metrics in equation \ref{eqn:conformalform}, this
minimal sphere is realized at $r_0 = (m/2)^{1/(n-2)}$.  Furthermore,
in the conformal picture, the $Z_2$ isometry is reflection through
this sphere in the conformal coordinate chart, which explicitly is
$r \rightarrow r_0^2 / r$. Hence, zero goes to infinity and vice
versa, highlighting the nonobvious fact that there is another
asymptotically flat end in the neighborhood of $r=0$ in the
conformal coordinate chart of equation \ref{eqn:conformalform}.

When $m<0$, figure \ref{NegativeSchwarzschild} gives a nice picture
of the singularity.  We comment that the nature of this singularity
is complicated by the fact that the graph is becoming null at the
singularity as well.  We refer the reader to \cite{BrayJauregui} for
more discussion.

\subsection{The Total Mass of Asymptotically Flat Manifolds}

The Schwarzschild metrics give us examples of asymptotically flat
manifolds where a notion of total mass is well defined.  A logical
next step is to find the most general notion of an asymptotically
flat manifold for which a physically relevant definition of total
mass can be defined.  We refer the reader to papers by Bartnik who
studied this problem in detail \cite{Bartnik}. The most commonly
used definitions for an asymptotically flat manifold $M^n$, $n \ge
3$, and its total mass $m$, are given below.

\begin{definition}\cite{Bartnik}\label{def:af}
    A complete Riemannian manifold $(M^n,g)$ of dimension $n$ is said to be {\bf asymptotically flat}
    if there is a compact subset $K\subset M$, such that $M\backslash K$ is diffeomorphic to
    $\mathbb R^n\backslash\{|x|\leq 1\}$, and a diffeomorphism
    $\Phi:M\backslash K\to \mathbb R^n\backslash\{|x|\leq 1\}$ such that, in the coordinate chart
    defined by $\Phi$, $g=g_{ij}(x)dx^idx^j$, where
\begin{align*}
    g_{ij}(x) &= \delta_{ij}+O(|x|^{-p}) \\
    |x||g_{ij,k}(x)|+|x|^2|g_{ij,kl}(x)| &= O(|x|^{-p}) \\
    |R(g)(x)| &= O(|x|^{-q})
\end{align*}
for some $q>n$ and $p>(n-2)/2$.
\end{definition}

\begin{definition}\cite{ADM,Bartnik}\label{def:adm}
    The total mass $m$ of a complete, asymptotically flat manifold $(M,g)$ is defined to be
\begin{equation}\label{eqn:massdef}
    m = \lim_{r\to\infty} \frac{1}{2(n-1)\omega_{n-1}} \int_{S_r} \sum_{i,j} (g_{ij,i}-g_{ii,j}) \nu_j dS_r,
\end{equation}
where $\omega_{n-1}$ is the volume of the $(n-1)$ unit sphere, $S_r$
is the coordinate sphere of radius $r$, $\nu$ is the outward unit
normal to $S_r$ and $dS_r$ is the area element of $S_r$ in the
coordinate chart.
\end{definition}

The above definition for an asymptotically flat manifold is easily
generalized to allow for more than one asymptotically flat end,
where each connected component of $M \backslash K$, referred to as
an end, is required to satisfy the stated asyptotically flat
condition. For example, each Schwarzschild metric with $m>0$ has two
asymptotically flat ends, as can be seen in figure
\ref{PositiveSchwarzschild}. The definition of total mass then
applies to each end separately, with each end having its own total
mass.

The above definition for the total mass, also known as the ADM mass,
was defined in \cite{ADM}.  (Definitions of total mass sometimes
differ from the above by a factor, corresponding to different
conventions than the one defined in section \ref{sec:ss}.) Later,
Bartnik showed that the above definition is well-defined by showning
that it is independent of the choice of asymptotically flat
coordinates \cite{Bartnik}. Finally, the above definition of total
mass is compatible with our physically derived Schwarzschild metrics
with metric components $g_{ij} = \left(1 + m/2r^{n-2}
\right)^{4/(n-2)} \delta_{ij}$, which is easily verified by direct
calculation.

\subsection{Local Energy Density and Scalar Curvature}

Suppose that $(M^n,g)$ is a Riemannian n-manifold isometrically
embedded in an (n+1) dimensional Lorentzian spacetime. Suppose that
$M^n$ has zero second fundamental form in the spacetime.  This is a
simplifying assumption which allows us to think of $(M^n,g)$ as a
``$t=0$'' slice of the spacetime.  Then by the Einstein equation
(equation \ref{eqn:EE}), the energy density $\mu$ as seen by an
observer moving orthogonally to the hypersurface $M^n$ in the unit
time direction $\nu$ is
\begin{equation}\label{eqn:energydensity}
   \mu = T(\nu,\nu) = \frac{1}{(n-1)\omega_{n-1}} G(\nu,\nu) = \frac{R}{2(n-1)\omega_{n-1}},
\end{equation}
where the last equality follows from the Gauss equation \cite{ONeill} and $R$ is
the scalar curvature of $(M^n,g)$.

Equation \ref{eqn:energydensity} reveals an important connection
between energy density and scalar curvature, namely that they are
the same, up to a factor, in the context of zero second fundamental
form space-like hypersurfaces of spacetimes.  This equation is the
reason that the Riemannian positive mass inequality, the Riemannian
Penrose inequality, and the Riemannian ZAS inequality, discussed in
the next section, are not just important physical statements
concerning energy density and total mass, but are also fundamental
geometric statements about scalar curvature.  More specifically, the
physical assumption of nonnegative energy density everywhere implies
that $(M^n,g)$ must have nonnegative scalar curvature.  Hence, the
geometric study of asymptotically flat manifolds with nonnegative
scalar curvature has physical implications.

\subsection{Example:  Conformally Flat Manifolds}

Consider asymptotically flat manifolds of the form $(\real^n,g)$, $n
\ge 3$, where
\begin{equation}\label{eqn:confmetric}
   g = u(x)^{\frac{4}{n-2}} \delta,
\end{equation}
where $\delta$ is the flat metric on $\real^n$ and $u(x) > 0$ is
$C^2$ and goes to one at infinity in a manner satisfying the
asymptotically flat condition.  Then direct computation shows that
\begin{equation}\label{eqn:confscalarcurv}
   R = -\frac{4(n-1)}{n-2} u(x)^{-\frac{n+2}{n-2}} \Delta u,
\end{equation}
where $\Delta$ is the Laplacian operator of $(\real^n,\delta)$. (The
exponent $\frac{4}{n-2}$ in equation \ref{eqn:confmetric} is chosen
to make equation \ref{eqn:confscalarcurv} particularly nice.) Hence,
nonnegative energy density implies that our manifold must have
nonnegative scalar curvature everywhere, which in turn implies that
$u(x)$ must be superharmonic.

Also, $R=0$ in an open region $\Omega$ implies that $u(x)$ is
harmonic in $\Omega$.  Furthermore, the spherically symmetric
harmonic functions in $\real^n$ are linear combinations of $1$ and
$\frac{1}{r^{n-2}}$.  Hence, if we require the asymptotically flat
manifold to be spherically symmetric with zero scalar curvature
outside a finite radius, then it follows that
\begin{equation}\label{eqn:asySchwarz}
   u(x) = 1 + \frac{m}{2r^{n-2}}
\end{equation}
there, for some $m$.  Hence, the manifold will be isometric to a
Schwarzschild metric outside the finite radius.  This is a
convenient assumption to make since then the total mass is just $m$.
Then
\begin{eqnarray}
   m &=& \lim_{r \rightarrow \infty} \frac{2}{(n-2)\omega_{n-1}} \int_{S_r}
       - \langle \nabla u, \nu \rangle dS_r \\
     &=& \frac{2}{(n-2)\omega_{n-1}} \int_{M^n} -\Delta u \;dV_\delta \\
     &=& \frac{2}{(n-2)\omega_{n-1}} \int_{M^n} \frac{n-2}{4(n-1)} R u(x)^{\frac{n+2}{n-2}} \;dV_\delta \\
     &=&  \int_{M^n} \frac{R}{2(n-1)\omega_{n-1}} u(x)^{\frac{2-n}{n-2}}
     \;dV_g \label{eqn:intscalarcurv}
\end{eqnarray}
where $\nu$ is the outward unit normal to $S_r$, $dS_r$ is the area
form of $S_r$, $dV_\delta$ is the volume form of $(M^n,\delta)$, and
$dV_g$ is the volume form of $(M^n,g)$. The first equality follows
from the special form of $u(x)$ given by equation
\ref{eqn:asySchwarz}, although not surprisingly it is true for all
conformally flat asymptotically flat manifolds.  The second equality
is the divergence theorem.  The third step follows from equation
\ref{eqn:confscalarcurv} and the fourth step follows from $dV_g =
u(x)^\frac{2n}{n-2} dV_\delta$.

Now suppose that we let $u(x) = 1 + \epsilon v(x)$, where $v(x)$ is
$C^2$, superharmonic, and equal to $\frac{1}{2r^{n-2}}$ outside a
finite radius.  Then in the limit as $\epsilon$ goes to zero, $u(x)
\approx 1$ and
\begin{equation}\label{eqn:approx}
   m \approx  \int_{M^n} \frac{R}{2(n-1)\omega_{n-1}} \;dV_g,
\end{equation}
by which we mean that the ratio of the left hand side to the right
hand side of this equation equals one in the limit as $\epsilon$
goes to zero.

Hence, equation \ref{eqn:approx} implies that the total mass is
approximately the integral of energy density (as defined in equation
\ref{eqn:energydensity}) in this example.  This is what one would
expect from Newtonian physics. Equivalently, if we take this model
to be the physically relevant perturbation of Minkowski space and
its flat $\real^3$ zero second fundamental form hypersurface
corresponding to the Newtonian limit (a discussion which we will
skip), this argument defines the correct proportionality constant in
the Einstein equation (equation \ref{eqn:EE}).

However, especially as we get further from the $\epsilon = 0$ limit,
it is important to note that total mass is not the integral of
energy density. Since $u(x)$ is superharmonic, the maximum principle
implies that $u(x) \ge 1$. Hence, in this particular example,
\begin{equation}
   m \le  \int_{M^n} \frac{R}{2(n-1)\omega_{n-1}} \;dV_g,
\end{equation}
which roughly corresponds to the fact that Newtonian potential
energy, which for two positive point masses is $- m_1 m_2 /
r^{n-2}$, is negative. However, there are also examples of
asymptotically flat manifolds where
\begin{equation}
   m \ge  \int_{M^n} \frac{R}{2(n-1)\omega_{n-1}} \;dV_g.
\end{equation}
If fact, given any asymptotically flat manifold with nonnegative
scalar curvature, one can multiply it by a conformal factor to achieve
zero scalar curvature everywhere.  By the Riemannian positive mass
inequality (which currently requires $n \le 7$ or $M^n$ to be spin)
discussed in the next section, the total mass of the resulting zero
scalar curvature manifold will be positive, unless the resulting
manifold is the flat metric on $\real^n$. Hence, the connection
between local energy density and total mass is highly nontrivial.

The Riemannian positive mass inequality simply states that
nonnegative energy densities should imply that the total mass is
also nonnegative, with zero total mass only for the flat metric on
$\real^3$.  Physically, if this were not the case, then the local
condition of nonnegative energy density would not be enough to imply
that the total mass of an isolated body was also nonnegative. Hence,
the positive mass inequality shows that general relativity with a
nonnegative energy density condition is consistent with the fact
that objects of negative total mass are not observed in the
universe.

Note that equation \ref{eqn:intscalarcurv} proves the Riemannian
positive mass inequality for asymptotically flat manifolds conformal
to the flat metric on $\real^n$.  Clearly $R \ge 0$ implies $m \ge
0$, with $m = 0$ only when $R = 0$ everywhere, which by equation
\ref{eqn:confscalarcurv} implies that $u=1$ everywhere.

\subsection{Harmonically Flat Manifolds}

While definitions \ref{def:af} and \ref{def:adm} are important for
extending the notions of asymptotically flat and total mass to as
many manifolds as possible, for many purposes one can take a much
simpler view of what total mass is.  In \cite{PMT2}, Schoen and Yau
show that for any asymptotically flat manifold with $R \ge 0$, and
for any $\epsilon > 0$, one can perturb the manifold while
maintaining $R \ge 0$ such that the metric changes by less than
$\epsilon$ pointwise and the total mass changes by less than
$\epsilon$ as well, where the new perturbed manifold is
``harmonically flat.''

\begin{definition}\cite{PMT2,BrayPenrose}\label{def:hf}
    A complete Riemannian manifold $(M^n,g)$ of dimension $n$ is said to be {\bf harmonically flat}
    if there is a compact subset $K\subset M$, such that $M\backslash K$ is
    isometric to
\begin{equation*}
    (\real^n \backslash B_r(0), u(x)^{\frac{4}{n-2}}
    \delta_{ij})
\end{equation*}
     for some $r>0$ and some $u(x)$ which goes to one
    at infinity and is harmonic in $(\real^n \backslash B_r(0),
    \delta_{ij})$.
\end{definition}

The above definition is for manifolds with one asymptotically flat
end.  For manifolds with more than one end, each end is required to
have the above asymptotics.  Since harmonic functions in $\real^n$
are smooth, this imposes very nice asymptotics on harmonically flat
manifolds.  Furthermore, harmonic functions may be expanded in terms
of spherical harmonics, for which the first two terms are
\begin{equation}
   u(x) = 1 + \frac{m}{2r^{n-2}} +
   \mathcal{O}\left(\frac{1}{r^{n-1}}\right).
\end{equation}
Hence, the total mass is revealed as simply the coefficient of the
first nontrivial term in the spherical harmonic expansion.

This idea of perturbing the manifold to make the asymptotics as nice
as possible can be pushed even further, as noted in
\cite{BrayThesis}.  For any asymptotically flat manifold with $R \ge
0$, and for any $\epsilon > 0$, one can perturb the manifold while
maintaining $R \ge 0$ such that the metric changes by less than
$\epsilon$ pointwise and the total mass changes by less than
$\epsilon$ as well, where the new perturbed manifold is
``Schwarzschild at infinity.''

\begin{definition}\cite{BrayThesis}\label{def:schwarzatinfinity}
    A complete Riemannian manifold $(M^n,g)$ of dimension $n$ is said to be {\bf Schwarzschild at infinity}
    if there is a compact subset $K\subset M$, such that $M\backslash K$ is
    isometric to an end of a Schwarzschild metric
\begin{equation*}
    (\real^n \backslash B_r(0), \left( 1 + \frac{m}{2r^{n-2}}\right)^{\frac{4}{n-2}}
    \delta_{ij})
\end{equation*}
     for some $m$ and some $r>0$.
\end{definition}

As usual, manifolds with more than one end are required to have each
end isometric to an end of a Schwarzschild metric.  Hence, the above
definition shows that we can require the best asymptotics possible.
Furthermore, the above perturbation theorems imply that if you can
prove theorems like the Riemannian positive mass inequality, the
Riemannian Penrose inequality, or the Riemannian ZAS inequality for
manifolds which are Schwarzschild at infinity, then you can prove
them for asymptotically flat manifolds as well.  Thus, as far as the
picture one should have in one's head when considering
asymptotically flat manifolds, for many purposes one may as well
assume that they are Schwarzschild at infinity.

\subsection{New Example:  Graphs Over $\real^n$}

In this section we present a final set of examples, recently
discovered by George Lam \cite{LamThesis,LamPaper}.  In these
examples, the asymptotically flat manifold is assumed to be the
graph of a real-valued function over $\real^n$.  The Riemannian
positive mass inequality and the Riemannian Penrose inequality can
then be proven in these cases.  We refer the reader to
\cite{LamThesis,LamPaper} for more discussion and related results, including
analogous cases where graphs over $\real^n$ in $n+1$ dimensional
Minkowski space can be proven to satisfy the Riemannian ZAS
inequality in an manner similar to what we describe here.

\begin{figure}
   \begin{center}
   \includegraphics[width=120mm]{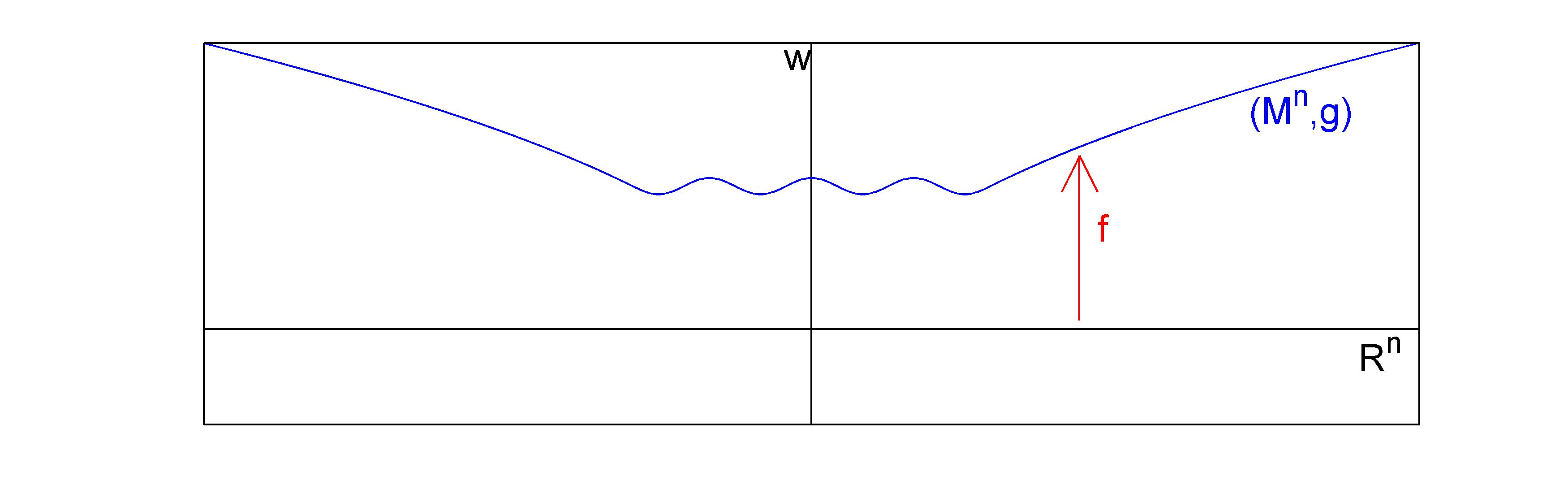}
   \end{center}
   \caption{The graph of a real-valued function $f$ defined on $\real^n$ has a natural metric, namely the
   one induced from the flat background metric on $\real^{n+1}$.  Lam's work provides an explicit formula for the total mass
   of these Riemannian manifolds as an integral involving scalar curvature and $df$.  \label{GraphOverRn}}
\end{figure}

Given a smooth function $f:\mathbb R^n\to\mathbb R$, the graph of
$f$ with metric induced from the flat metric on $\real^{n+1}$ is a
complete Riemannian manifold isometric to $(\mathbb R^n,
\delta_{ij}+f_if_j)$, as sketched in figure \ref{GraphOverRn}. The
following definition for an asymptotically flat function $f$ implies
that the resulting graph manifold is asymptotically flat.

\begin{definition}\cite{LamThesis, LamPaper}
    Let $f:\mathbb R^n\to\mathbb R$ be a smooth function and let $f_i$ denote the $i$th partial derivative of $f$. We say that $f$ is {\bf asymptotically flat} if
\begin{align*}
    f_i(x) &= O(|x|^{-p/2}) \\
    |x||f_{ij}(x)|+|x|^2|f_{ijk}(x)| &= O(|x|^{-p/2})
\end{align*}
at infinity for some $p>(n-2)/2$.
\end{definition}

Lam is then able to derive an explicit expression for the total mass
in terms of the scalar curvature of the graph, similar to equation
\ref{eqn:intscalarcurv} derived in the conformally flat manifold
example.

\begin{theorem}[Riemannian Positive mass inequality for Graphs over $\mathbb
R^n$]\cite{LamThesis,LamPaper}\label{thm:PMTGraphCase} Let $(M^n,g)$
be the graph of a smooth asymptotically flat function $f:\mathbb
R^n\to\mathbb R$ with the induced metric from $\mathbb R^{n+1}$. Let
$R$ be the scalar curvature and $m$ the total mass of $(M^n,g)$. Let
$dV_g$ denote the volume form on $(M^n,g)$. Then
\begin{equation}
    m=\int_{M^n} \frac{R}{2(n-1)\omega_{n-1}} \cdot \frac{1}{\sqrt{1+|\nabla f|^2}}\; dV_g.
\end{equation}
In particular, $R\geq 0$ implies $m\geq 0$.
\end{theorem}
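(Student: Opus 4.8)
The plan is to reduce the stated identity to a pure divergence statement on $\real^n$ and then apply the divergence theorem, exactly in the spirit of the conformally flat computation leading to equation \ref{eqn:intscalarcurv}. First I would record the basic data of the graph. Writing $W = \sqrt{1+|\nabla f|^2}$, the induced metric is $g_{ij} = \delta_{ij} + f_i f_j$, with inverse $g^{ij} = \delta^{ij} - f_i f_j/W^2$ and determinant $\det g = W^2$, so that $dV_g = W\,dx$ where $dx$ is Lebesgue measure on $\real^n$. Consequently the integrand collapses, since $\frac{R}{2(n-1)\omega_{n-1}}\cdot\frac{1}{W}\,dV_g = \frac{R}{2(n-1)\omega_{n-1}}\,dx$, and the whole claim becomes equivalent to $\int_{\real^n} R\,dx = 2(n-1)\omega_{n-1}\,m$.

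Next I would compute $R$ explicitly. Since the graph sits in flat $\real^{n+1}$ with second fundamental form $h_{ij} = f_{ij}/W$, the Gauss equation (the ambient space being flat) gives $R = H^2 - |h|^2$, where $H = g^{ij}h_{ij}$ and $|h|^2 = g^{ik}g^{jl}h_{ij}h_{kl}$. Expanding these contractions with the inverse metric produces a closed form for $R$ in terms of $\Delta f$, the Hessian entries $f_{ij}$, and the scalar combinations $\sum_i f_i f_{ij}$ and $\sum_{i,j} f_i f_j f_{ij}$, each divided by an appropriate power of $W$; the result naturally splits into a group carrying the factor $1/W^2$ and a group carrying $1/W^4$.

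The heart of the argument, and the step I expect to be the main obstacle, is to recognize this $R$ as an exact divergence. I would guess the vector field $\tilde V_j = \big((\Delta f) f_j - \sum_i f_i f_{ij}\big)/W^2$ and verify $\sum_j \partial_j \tilde V_j = R$ by direct differentiation, exploiting the convenient identities $\sum_i f_i f_{ij} = \tfrac{1}{2}\partial_j(W^2)$ and $\sum_j f_j\big(\sum_i f_i f_{ij}\big) = \sum_{i,j} f_i f_j f_{ij}$. The product rule then generates precisely the $1/W^2$ and $1/W^4$ groups appearing in the scalar curvature formula, and they match term by term. Getting the powers of $W$ to line up is the delicate bookkeeping here; at leading order (small $\nabla f$, $W\to 1$) the identity degenerates to the familiar $R \approx (\Delta f)^2 - |\nabla^2 f|^2$, which is a useful consistency check.

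Finally, I would apply the divergence theorem on balls $B_r$ to obtain $\int_{B_r} R\,dx = \int_{S_r} \tilde V_j \nu_j\,dS_r$. A direct computation of the ADM integrand for $g_{ij}=\delta_{ij}+f_if_j$ gives $\sum_i(g_{ij,i} - g_{ii,j}) = (\Delta f) f_j - \sum_i f_i f_{ij} = W^2\tilde V_j$, so the ADM boundary integrand differs from $\tilde V_j$ only by the factor $W^2 = 1+|\nabla f|^2$. Because asymptotic flatness forces $|\nabla f|\to 0$ while $\tilde V_j = O(|x|^{-1-p})$ at infinity, the flux of $(W^2-1)\tilde V$ across $S_r$ is $O(r^{\,n-2-2p})$ with $p>(n-2)/2$, hence vanishes in the limit. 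Therefore the two boundary integrals share the same limit, the decay hypotheses guarantee convergence of $\int_{\real^n} R\,dx$ and that no flux is lost at infinity, and we conclude $\int_{\real^n} R\,dx = 2(n-1)\omega_{n-1}\,m$, which is the asserted formula; nonnegativity of $R$ then immediately gives $m\geq 0$.
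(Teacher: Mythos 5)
Your proposal is correct and takes essentially the same route as the paper: you identify exactly the vector field of Lemma \ref{LamLemma}, namely $\frac{1}{1+|\nabla f|^2}(f_{ii}f_j - f_{ij}f_i)\partial_j$, whose divergence is $R$, and you combine it with the same computation of the ADM integrand $g_{ij,i}-g_{ii,j}=f_{ii}f_j-f_{ij}f_i$ and the same observation that inserting the factor $1/(1+|\nabla f|^2)$ does not change the boundary limit. Your flux estimate $O(r^{\,n-2-2p})$ makes that last step slightly more explicit than the paper's, but the argument is the same.
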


As in the conformally flat manifold example, the total mass is
approximately the integral of scalar curvature divided by
$2(n-1)\omega_{n-1}$ for perturbations of the flat metric
corresponding to $df = 0$.  More generally, the total mass is less
than this integral, roughly corresponding to the fact that potential
energy is negative in Newtonian mechanics, as we also saw in the
conformally flat example.

However, unlike the conformal examples, these graph examples can be
extended to manifolds with boundaries to prove the Riemannian
Penrose inequality in many graph cases as well, sketched in figure
\ref{PenroseGraphCase}. To achieve this, the first step is the
following theorem, which we note still gives an explicit expression
for the total mass in terms of the scalar curvature and a boundary
term.

\begin{figure}
   \begin{center}
   \includegraphics[width=120mm]{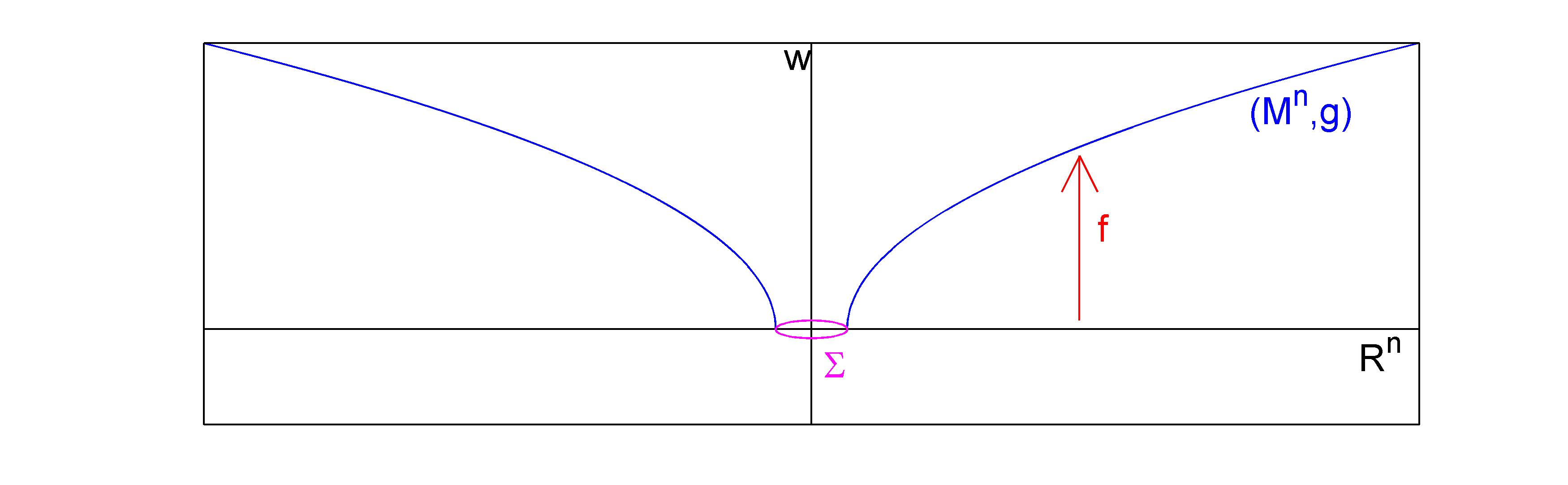}
   \end{center}
   \caption{Same example as in the previous figure, but now the graph is allowed to have a vertical boundary.
   Lam's previous result generalizes by gaining a boundary term, leading to a proof of the Riemannian Penrose inequality
   for these graph cases.  \label{PenroseGraphCase}}
\end{figure}

\begin{theorem}\cite{LamThesis,LamPaper}
Let $\Omega$ be a bounded and open (but not necessarily connected)
set with smooth boundary $\Sigma$ in $\mathbb R^n$. Let $f: \mathbb
R^n \backslash \Omega \to \mathbb R$ be a smooth asymptotically flat
function such that $\Sigma \subset f^{-1}(0)$ and $|\nabla
f(x)|\to\infty$ as $x\to\Sigma$. Let $(M^n,g)$ be the graph of $f$
with the induced metric from $(\mathbb R^n \backslash \Omega) \times
\mathbb R$. Let $H$ be the mean curvature of $\Sigma$ in $(\mathbb
R^n, \delta)$ and $d\Sigma$ be the area form on $\Sigma$. Then the
total mass of $(M^n,g)$ is
\begin{equation}
    m =  \int_{\Sigma} \frac{H d\Sigma}{2(n-1)\omega_{n-1}} +
     \int_{M^n} \frac{R}{2(n-1)\omega_{n-1}} \cdot \frac{1}{\sqrt{1+|\nabla f|^2}} \; dV_g.
\end{equation}
\end{theorem}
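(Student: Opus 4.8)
The plan is to mimic the derivation of the mass formula from the previous theorem (Theorem \ref{thm:PMTGraphCase}), but now keep careful track of the boundary contribution that arises because the domain has a boundary $\Sigma$ and the graph becomes vertical there. The key input is that the scalar curvature of the graph metric $g = \delta_{ij} + f_i f_j$ admits an expression that can be written as a divergence of an explicit vector field built from $\nabla f$ and its second derivatives. First I would recall (or rederive) the pointwise identity for the scalar curvature of such a graph. The metric $g = \delta + df\otimes df$ has inverse $g^{ij} = \delta^{ij} - \frac{f^i f^j}{1+|\nabla f|^2}$, and a direct but lengthy computation expresses $R\sqrt{1+|\nabla f|^2}$ (equivalently $R\,\frac{dV_g}{dV_\delta}$) as the Euclidean divergence of a one-form $V$ whose components are
\begin{equation*}
    V_i = \frac{1}{1+|\nabla f|^2}\left( f_i f_{jj} - f_j f_{ij}\right),
\end{equation*}
up to the normalizing constant. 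This is precisely the identity that makes the previous theorem work; the only difference here is the presence of $\partial\Omega = \Sigma$.

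\emph{Second}, I would integrate this divergence identity over $M^n \cong \real^n\backslash\Omega$ using the divergence theorem. Because $f$ is asymptotically flat, the contribution from the large coordinate sphere $S_r$ as $r\to\infty$ reproduces exactly the ADM mass $m$ of Definition \ref{def:adm}, just as in the conformally flat computation culminating in equation \ref{eqn:intscalarcurv}. This is the step where one checks that the flux of $V$ through $S_r$ converges to $2(n-1)\omega_{n-1}\,m$; the asymptotic flatness hypotheses on $f_i$, $f_{ij}$, $f_{ijk}$ are tailored to guarantee both that this limit exists and that $R = O(|x|^{-q})$ with $q>n$, so the bulk integral converges absolutely.

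\emph{Third}, and this is where the new boundary term appears, I would evaluate the flux of $V$ through $\Sigma$. Here the hypotheses $\Sigma\subset f^{-1}(0)$ and $|\nabla f|\to\infty$ as $x\to\Sigma$ are essential: as one approaches $\Sigma$, the unit normal $\nu$ to $\Sigma$ in $(\real^n,\delta)$ aligns with $\frac{\nabla f}{|\nabla f|}$ (since $f$ is constant on $\Sigma$), and in the limit $|\nabla f|\to\infty$ the quantity $V\cdot\nu$ degenerates to an expression governed purely by the intrinsic geometry of $\Sigma$. The plan is to show that the limiting contribution is exactly $-\int_\Sigma H\,d\Sigma$ (with the sign and normalization matching the statement), where $H$ is the Euclidean mean curvature of $\Sigma$. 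Concretely, one decomposes the second derivatives $f_{ij}$ appearing in $\operatorname{div} V$ into tangential and normal parts along $\Sigma$; the tangential Hessian of $f$, divided by $|\nabla f|$, converges to the second fundamental form of the level sets, whose trace is $H$, while the normal-normal term $\frac{f_i f_j f_{ij}}{1+|\nabla f|^2}$ cancels appropriately in the limit.

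\textbf{The main obstacle} I anticipate is precisely this boundary limit: because $|\nabla f|\to\infty$ at $\Sigma$, the vector field $V$ and the integrand $R\sqrt{1+|\nabla f|^2}$ are a priori singular there, so one cannot naively apply the divergence theorem on $\real^n\backslash\Omega$. The rigorous route is to integrate over $\real^n\backslash\Omega_\delta$, where $\Omega_\delta$ is a slight thickening of $\Omega$ whose boundary $\Sigma_\delta = \{f = \delta\}$ is a smooth level set on which $\nabla f$ is finite, apply the divergence theorem cleanly there, and then take $\delta\to 0$. The delicate analysis is to prove that $\int_{\Sigma_\delta} V\cdot\nu_\delta\,d\Sigma_\delta \to -\int_\Sigma H\,d\Sigma$, controlling the interplay between the blowup of $|\nabla f|$ and the convergence of $\Sigma_\delta$ to $\Sigma$; this requires a uniform estimate showing the level sets $\{f=\delta\}$ converge to $\Sigma$ in $C^1$ (or better) with mean curvatures converging, which follows from the smoothness of $f$ up to $\Sigma$ together with the non-degeneracy encoded in $|\nabla f|\to\infty$. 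Once this limit is established, combining the three flux computations yields the stated formula.
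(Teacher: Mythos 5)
The paper never actually proves this theorem: it proves only the no-boundary case (Theorem \ref{thm:PMTGraphCase}) via the divergence identity of Lemma \ref{LamLemma}, and for the boundary case it simply refers to \cite{LamThesis,LamPaper}, remarking that it is ``a natural generalization'' of the no-boundary argument. Your proposal is precisely that natural generalization, so in spirit it matches the intended proof. Two points need correcting, though. First, the identity of Lemma \ref{LamLemma} is $\nabla\cdot V = R$, where $R$ is the scalar curvature of the graph regarded as a function on the base $\real^n$ and the divergence is Euclidean; it is \emph{not} $\nabla\cdot V = R\sqrt{1+|\nabla f|^2}$. With your version the bulk term would come out as $\int_{M^n} R\, dV_g$ rather than the stated $\int_{M^n} R\,(1+|\nabla f|^2)^{-1/2}\, dV_g$; the correct weight appears because $\int R\, dV_\delta = \int R\,(1+|\nabla f|^2)^{-1/2}\, dV_g$. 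Second, for the boundary flux the cleanest route is the exact pointwise identity
\begin{equation*}
   V\cdot\frac{\nabla f}{|\nabla f|} \;=\; \frac{|\nabla f|^2}{1+|\nabla f|^2}\,
   \nabla\cdot\!\left(\frac{\nabla f}{|\nabla f|}\right),
\end{equation*}
so the flux density of $V$ through a level set $\{f=\delta\}$ equals $\tfrac{|\nabla f|^2}{1+|\nabla f|^2}$ times the Euclidean mean curvature of that level set; as $|\nabla f|\to\infty$ the prefactor tends to $1$ and the mean curvatures tend to $H$ (computed with respect to the normal pointing out of $\Omega$, which is the convention that makes the Aleksandrov--Fenchel step work for convex $\Omega$). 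This replaces your looser tangential/normal decomposition of the Hessian. Finally, be careful with your justification of the limit: you cannot invoke ``smoothness of $f$ up to $\Sigma$,'' since that is incompatible with $|\nabla f|\to\infty$ there. The $C^2$ convergence of the level sets $\{f=\delta\}$ to $\Sigma$, with convergent mean curvatures, is genuinely an additional regularity input near $\Sigma$ (made precise in Lam's paper, and only implicit in the hypotheses as stated here), not a consequence of the stated assumptions. Modulo these corrections your outline is sound.
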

The next step is to use a special case of the Aleksandrov-Fenchel
inequality.
\begin{lemma}\cite{Schneider}
If $\Sigma$ is a convex surface in $\mathbb R^n$ with mean curvature
$H$ and area $|\Sigma|$, then
\begin{equation}\label{AFI}
    \frac{1}{2(n-1)\omega_{n-1}} \int_{\Sigma} H d\Sigma \geq \frac 12 \left( \frac{|\Sigma|}{\omega_{n-1}} \right) ^{\frac{n-2}{n-1}}.
\end{equation}
\end{lemma}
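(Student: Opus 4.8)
The plan is to recognize the two sides of (\ref{AFI}) as the classical quermassintegrals of the convex body $K\subset\real^n$ bounded by $\Sigma$, and then to invoke the Aleksandrov--Fenchel inequality, which is precisely the content of Schneider's cited result. Write $\sigma_k(\kappa)$ for the $k$th elementary symmetric function of the principal curvatures $\kappa_1,\dots,\kappa_{n-1}$ of $\Sigma$, so that $H=\sigma_1$ and the area form corresponds to $\sigma_0\equiv 1$. Recall the curvature-integral representation of the quermassintegrals,
\begin{equation*}
W_j(K)=\frac{1}{n}\int_\Sigma \frac{\sigma_{j-1}(\kappa)}{\binom{n-1}{j-1}}\,d\Sigma,\qquad j=1,\dots,n,
\end{equation*}
so that $W_1(K)=\frac{1}{n}|\Sigma|$ and $W_2(K)=\frac{1}{n(n-1)}\int_\Sigma H\,d\Sigma$. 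In these terms, inequality (\ref{AFI}) is equivalent, after clearing constants and using that the unit ball in $\real^n$ has volume $\omega_{n-1}/n$, to the single power inequality
\begin{equation*}
\left(\frac{W_2(K)}{\omega_{n-1}/n}\right)^{\frac{1}{n-2}}\ \geq\ \left(\frac{W_1(K)}{\omega_{n-1}/n}\right)^{\frac{1}{n-1}}.
\end{equation*}

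The key input is that the quermassintegrals are mixed volumes, $W_j(K)=V(\underbrace{K,\dots,K}_{n-j},\underbrace{B,\dots,B}_{j})$, where $B$ is the unit ball, and that $W_j(B)=\omega_{n-1}/n$ for every $j$. The Aleksandrov--Fenchel inequality applied to these mixed volumes yields the log-concavity relations $W_j(K)^2\geq W_{j-1}(K)\,W_{j+1}(K)$ for $1\le j\le n-1$. Hence $j\mapsto \log W_j(K)$ is concave, and concavity applied to the three indices $1<2<n$, together with the normalization $W_n(K)=\omega_{n-1}/n$, gives
\begin{equation*}
W_2(K)^{\,n-1}\ \geq\ W_1(K)^{\,n-2}\,W_n(K)=W_1(K)^{\,n-2}\cdot\frac{\omega_{n-1}}{n}.
\end{equation*}
Dividing by $(\omega_{n-1}/n)^{n-1}$ produces exactly the displayed power inequality, and substituting the formulas for $W_1$ and $W_2$ and multiplying by $\tfrac12$ recovers (\ref{AFI}).

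The substantive obstacle is the Aleksandrov--Fenchel inequality itself, which supplies the log-concavity of the $W_j$; everything else is bookkeeping with the curvature-integral representation and the elementary fact that a log-concave positive sequence satisfies $a_j^{\,k-i}\ge a_i^{\,k-j}a_k^{\,j-i}$ for $i<j<k$ (this is just concavity of $\log$). Since we only need the special case in which all but two of the bodies equal the ball, one may alternatively bypass the full Aleksandrov--Fenchel machinery and appeal to the more elementary quermassintegral inequalities for convex bodies, for instance via Steiner symmetrization or via the monotonicity of $W_j$ in the Gauss-map parametrization by the support function. Either route also pins down the equality case: the concavity of $\log W_j$ degenerates precisely when $\Sigma$ is a round sphere, which is the expected rigidity statement and the geometry that produces the equality case of the Riemannian Penrose inequality.
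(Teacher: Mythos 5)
Your derivation is correct and is exactly the route the paper intends: the paper states this lemma with no proof beyond citing Schneider and describing it as ``a special case of the Aleksandrov--Fenchel inequality,'' and your reduction via the quermassintegrals $W_1(K)=\tfrac1n|\Sigma|$, $W_2(K)=\tfrac{1}{n(n-1)}\int_\Sigma H\,d\Sigma$, the normalization $W_n(K)=W_j(B)=\omega_{n-1}/n$, and log-concavity applied at the indices $1<2<n$ is precisely the standard argument behind that citation. The constants check out, so nothing further is needed.
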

Putting the above two results together, Lam proves the Riemannian
Penrose inequality in this case.
\begin{corollary}[Riemannian Penrose Inequality for Graphs on $\mathbb R^n$ with Convex
Boundaries] \cite{LamThesis,LamPaper} With the same hypotheses as in
Theorem 2, and the additional assumption that each connected
component $\Omega_i$ of $\Omega$ is convex, then
\begin{equation}
    m \geq \sum_{i=1}^k \frac 12 \left( \frac{|\Sigma_i|}{\omega_{n-1}} \right) ^{\frac{n-2}{n-1}} +
     \int_{M^n} \frac{R}{2(n-1)\omega_{n-1}} \cdot \frac{1}{\sqrt{1+|\nabla f|^2}}\; dV_g,
\end{equation}
where $\Sigma_i = \partial \Omega_i$.  In particular, $R \ge 0$
implies that
\begin{equation}\label{eqn:RPIGraphCase}
    m \geq \sum_{i=1}^k \frac 12 \left( \frac{|\Sigma_i|}{\omega_{n-1}} \right)
    ^{\frac{n-2}{n-1}}.
\end{equation}
\end{corollary}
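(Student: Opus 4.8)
The plan is to combine the mass formula of Theorem 2 with the Aleksandrov--Fenchel inequality of the preceding Lemma, carried out component by component. The first thing I would observe is that, since the $\Omega_i$ are precisely the connected components of $\Omega$, the boundary decomposes as a disjoint union $\Sigma = \partial\Omega = \bigsqcup_{i=1}^k \Sigma_i$ with $\Sigma_i = \partial\Omega_i$. Consequently the single boundary integral appearing in Theorem 2 splits additively,
\[
\int_\Sigma \frac{H\,d\Sigma}{2(n-1)\omega_{n-1}} = \sum_{i=1}^k \int_{\Sigma_i} \frac{H\,d\Sigma}{2(n-1)\omega_{n-1}},
\]
so the task reduces to bounding each component's contribution from below.

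Next I would apply the Lemma to each boundary piece separately. The extra hypothesis that every $\Omega_i$ is convex guarantees that each $\Sigma_i$ is a convex hypersurface in $(\real^n,\delta)$, so inequality \ref{AFI} applies verbatim with $\Sigma$ replaced by $\Sigma_i$ and gives
\[
\int_{\Sigma_i} \frac{H\,d\Sigma}{2(n-1)\omega_{n-1}} \geq \frac12 \left( \frac{|\Sigma_i|}{\omega_{n-1}} \right)^{\frac{n-2}{n-1}}.
\]
Summing over $i$ and substituting this lower bound for the boundary term in the mass formula of Theorem 2 yields exactly the claimed inequality for $m$. The final assertion is then immediate: when $R \geq 0$ the bulk term $\int_{M^n} \frac{R}{2(n-1)\omega_{n-1}}(1+|\nabla f|^2)^{-1/2}\, dV_g$ is a product of nonnegative factors and hence nonnegative, so it may simply be discarded to obtain \ref{eqn:RPIGraphCase}.

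I should be candid that the analytic substance of this corollary lives entirely in the two results it invokes; the remaining argument is a routine assembly, and I do not expect a genuine obstacle. The one point I would take care to check is the compatibility of orientation conventions: the Lemma concerns the mean curvature $H$ of a convex surface, which with the outward unit normal is nonnegative, and this must be the same $H$ that enters the boundary term of Theorem 2, where the relevant normal points out of $\Omega_i$ (into the region $\real^n \setminus \Omega$ over which the graph is defined). Confirming that these sign conventions agree, so that the Aleksandrov--Fenchel bound is applied in the correct direction, is the only step I would single out for attention; once it is verified, the disjointness of the components and the additivity of the boundary integral make the conclusion follow directly.
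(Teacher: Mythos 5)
Your proposal is correct and follows exactly the route the paper indicates: it obtains the corollary by ``putting the above two results together,'' i.e., splitting the boundary term of Theorem 2 over the connected components $\Sigma_i$, applying the Aleksandrov--Fenchel inequality \ref{AFI} to each convex component, and discarding the nonnegative bulk integral when $R \geq 0$. Your added remark about checking the sign convention for $H$ is a sensible precaution but does not change the argument.
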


Equation \ref{eqn:RPIGraphCase} is actually \textit{stronger} than
the Riemannian Penrose inequality in general dimension, which is
\begin{equation}
    m \geq  \frac 12 \left( \frac{\sum_{i=1}^k |\Sigma_i|}{\omega_{n-1}} \right)
    ^{\frac{n-2}{n-1}}.
\end{equation}
The inequality in equation \ref{eqn:RPIGraphCase} is not true in
general and has explicit counterexamples.  It is quite interesting
that the ``graph over $\real^n$'' case is sufficiently restrictive
to give this stronger result.

In fact, as far as the author is aware, the above theorem is all of
the cases where the Riemannian Penrose inequality is known to be
true when $n \ge 8$ since the graph cases include the spherically
symmetric examples, which were the only cases known to the author
prior to the above result.  The general case of the Riemannian
Penrose inequality is known in dimension $n \le 7$
\cite{BrayPenrose, BrayLee}, as discussed in the next section.

The key identity which Lam derived to prove the above results is a
clever expression for the scalar curvature of graphs as the
divergence of a vector field in the $\real^n$ base space.  Direct
computation shows that
\begin{equation}\label{eqn:directcomp}
    R= \frac{1}{1+|\nabla f|^2} \left[ f_{ii}f_{jj} - f_{ij}f_{ij} - \frac{2f_jf_k}{1+|\nabla f|^2} (f_{ii}f_{jk} - f_{ij}f_{ik})
    \right],
\end{equation}
where subscripts denote partial differentiation in $\real^n$ and
$\nabla$ is the gradient in $\real^n$.  Denoting divergence in
$\real^n$ as $\nabla \cdot$, Lam then proves the following result.
\begin{lemma} \label{LamLemma}\cite{LamThesis, LamPaper}
The scalar curvature of the graph $(\mathbb R^n,\delta_{ij}+f_if_j)$
is
\begin{equation}
  R = \nabla \cdot \left( \frac{1}{1+|\nabla f|^2} (f_{ii}f_j - f_{ij}f_i)\partial_j \right).
\end{equation}
\end{lemma}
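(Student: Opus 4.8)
The plan is to prove the identity by taking equation \ref{eqn:directcomp} as the known starting point and verifying that the divergence on the right-hand side reproduces it. Set $W = 1 + |\nabla f|^2$, so the proposed vector field has $j$th component $V_j = W^{-1}(f_{ii}f_j - f_{ij}f_i)$, and record that $\partial_j W = 2 f_k f_{kj}$; throughout, repeated indices are summed and I will use freely that $f_{ij} = f_{ji}$ and that third partials commute.

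First I would expand $\nabla \cdot V = \partial_j V_j$ with the product rule, separating the contribution in which $\partial_j$ differentiates the numerator $f_{ii}f_j - f_{ij}f_i$ from the contribution in which it differentiates the scalar factor $W^{-1}$. The numerator term yields $W^{-1}(f_{iij}f_j + f_{ii}f_{jj} - f_{ijj}f_i - f_{ij}f_{ij})$. The two third-order pieces cancel: relabeling $i \leftrightarrow j$ in $f_{ijj}f_i$ and using commutativity of third partials turns it into $f_{iij}f_j$, so $f_{iij}f_j - f_{ijj}f_i = 0$. What survives is exactly $W^{-1}(f_{ii}f_{jj} - f_{ij}f_{ij})$, matching the first group of terms in \ref{eqn:directcomp}.

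Next I would treat the factor term, using $\partial_j(W^{-1}) = -2 W^{-2} f_k f_{kj}$, which gives $-2 W^{-2} f_k f_{kj}(f_{ii}f_j - f_{ij}f_i)$. I must check this equals the remaining part of \ref{eqn:directcomp}, namely $-2 W^{-2} f_j f_k(f_{ii}f_{jk} - f_{ij}f_{ik})$. The two ``$f_{ii}$'' halves agree at once via $f_{kj} = f_{jk}$. For the ``$f_{ij}$'' halves I would observe that both $f_i f_k f_{ij} f_{kj}$ and $f_j f_k f_{ij} f_{ik}$ equal $\sum_l h_l^2$, where $h_l = f_m f_{lm}$ is the contraction of the Hessian with $\nabla f$; this follows by grouping the sums appropriately and relabeling, again using only symmetry of the Hessian. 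Combining the two contributions gives \ref{eqn:directcomp} term for term, establishing $R = \nabla \cdot V$.

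Since every step is elementary, the sole difficulty is index bookkeeping, and the step I expect to be the genuine crux is the pair of reductions in the last paragraph: the cancellation of the third-order derivatives and the identification of the two quadratic-in-Hessian contractions. Both rest on the freedom to relabel summed indices together with $f_{ij} = f_{ji}$, and it is easy to mismatch an index or drop the Hessian symmetry and conclude the two expressions differ. Should one prefer to derive \ref{eqn:directcomp} itself rather than assume it, the alternative is to compute the induced metric $g_{ij} = \delta_{ij} + f_i f_j$, invert it via the Sherman--Morrison formula to get $g^{ij} = \delta_{ij} - f_i f_j / W$, form the Christoffel symbols, and contract the Riemann tensor; this is correct but markedly longer and less transparent than the divergence verification above.
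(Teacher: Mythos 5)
Your proposal is correct and is essentially the paper's own proof: both expand the divergence by the product rule, cancel the third-order terms $f_{iij}f_j - f_{ijj}f_i$ by index relabeling, and identify the remaining expression with equation \ref{eqn:directcomp}. The only difference is that you spell out the index bookkeeping (in particular the equality of the two quadratic-in-Hessian contractions) that the paper leaves implicit, and your checks are all accurate.
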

\begin{proof}
\begin{eqnarray*}
 & & \nabla \cdot \left( \frac{1}{1+|\nabla f|^2} ( f_{ii}f_j - f_{ij}f_i ) \partial_j \right) \\
 &=& \frac{1}{1+|\nabla f|^2} ( f_{iij}f_j + f_{ii}f_{jj} - f_{ijj}f_i - f_{ij}f_{ij} ) - \frac{2f_{jk}f_k}{(1+|\nabla f|^2)^2} (f_{ii}f_j - f_{ij}f_i) \\
 &=& \frac{1}{1+|\nabla f|^2} \left[ f_{ii}f_{jj} - f_{ij}f_{ij} - \frac{2f_jf_k}{1+|\nabla f|^2} (f_{ii}f_{jk} - f_{ij}f_{ik}) \right]\\
 &=& R
\end{eqnarray*}
by \eqref{eqn:directcomp}.
\end{proof}

The key point is that the scalar curvature may be expressed as a
divergence of a vector field, which is far from obvious from
equation \ref{eqn:directcomp}.  This allows a divergence theorem
argument to prove the desired results.  We refer the reader to
\cite{LamThesis,LamPaper} for the case where there is an interior boundary
which leads to the Riemannian Penrose inequality.  That case is a
natural generalization of the case where there is no interior
boundary.  The no interior boundary case is used by Lam to prove the
Riemannian positive mass inequality for manifolds which are graphs
over $\real^n$, shown below.
\begin{proof}[Proof of Theorem \ref{thm:PMTGraphCase}]
By definition, the total mass of $(M^n,g)=(\mathbb
R^n,\delta_{ij}+f_if_j)$ is
\begin{align*}
    m &= \lim_{r\to\infty} \frac{1}{2(n-1)\omega_{n-1}} \int_{S_r} (g_{ij,i} - g_{ii,j}) \nu_j \; dS_r \\
    &= \lim_{r\to\infty} \frac{1}{2(n-1)\omega_{n-1}} \int_{S_r} (f_{ii}f_j + f_{ij}f_i - 2f_{ij}f_i) \nu_j \; dS_r \\
    &= \lim_{r\to\infty} \frac{1}{2(n-1)\omega_{n-1}} \int_{S_r} (f_{ii}f_j - f_{ij}f_i) \nu_j \; dS_r.
\end{align*}
By the asymtotic flatness assumption, the function $1/(1+|\nabla
f|^2)$ goes to 1 at infinity. Hence we can alternately write the
mass as
\begin{equation*}
    m = \lim_{r\to\infty} \frac{1}{2(n-1)\omega_{n-1}} \int_{S_r} \frac{1}{1+|\nabla f|^2} (f_{ii}f_j - f_{ij}f_i) \nu_j \; dS_r.
\end{equation*}
Now apply the divergence theorem in $(\mathbb R^n,\delta)$ and use
Lemma 1 to get
\begin{align*}
    m &= \frac{1}{2(n-1)\omega_{n-1}} \int_{\mathbb R^n} \nabla \cdot \left( \frac{1}{1+|\nabla f|^2} (f_{ii}f_j - f_{ij}f_i)\partial_j \right) \; dV_{\delta} \\
    &= \frac{1}{2(n-1)\omega_{n-1}} \int_{\mathbb R^n} R \; dV_{\delta} \\
    &= \frac{1}{2(n-1)\omega_{n-1}} \int_{M^n} R \frac{1}{\sqrt{1+|\nabla f|^2}} \; dV_g
\end{align*}
since
\begin{equation*}
    dV_g = \sqrt{\det(g)} \; dV_{\delta} = \sqrt{1+|\nabla f|^2} \; dV_{\delta}.
\end{equation*}
\end{proof}

We end this introduction by noting that Lam has generalized the
identity in lemma \ref{LamLemma} beyond $\real^n$, as is natural to
do, as described in \cite{LamThesis,LamPaper}.  It is of great interest to see
what other applications this identity may have.

\section{A Trio of Inequalities}\label{sec:trio}

In this section we describe the Riemannian positive mass inequality,
the Riemannian Penrose inequality, and the Riemannian ZAS
inequality. These three inequalities are intimately connected. Most
obviously, the Schwarzschild metrics (of which there are precisely
three up to scaling) are the cases of equality for these three
inequalities.

In dimension two, the only reasonable interpretation of these three
inequalities follows from the Gauss-Bonnet theorem (a natural story
which we will not tell here). This is not surprising since the
Gauss-Bonnet theorem is a global result about scalar curvature
(equal to twice the Gauss curvature of a surface) as well. Hence,
these theorems are, in a very real sense, nontrivial generalizations
of the Gauss-Bonnet theorem to higher dimensions. Since the two
dimensional case is somewhat exceptional and completely understood,
we will always assume that $n \ge 3$.

All three inequalities are known to be true in dimension $n \le 7$
\cite{PMT2,BrayLee,BrayJauregui} (assuming an interesting open
conjecture for the Riemannian ZAS inequality \cite{BrayJauregui},
discussed later). Also, the only known proof of the Riemannian ZAS
inequality in dimension $n \le 7$ (modulo that interesting open
conjecture \cite{BrayJauregui}) requires the Riemannian Penrose
inequality in dimension $n \le 7$ \cite{BrayLee}, which in turn so
far has only been proved using the Riemannian positive mass
inequality in dimension $n \le 7$ \cite{PMT2}.  Hence, there appears
to be a deep connection between these three inequalities.

\subsection{The Riemannian Positive Mass Inequality}

The Riemannian positive mass inequality is an elegant statement
which reveals the global effect of nonnegative scalar curvature on
an asymptotically flat manifold.  Since scalar curvature is
proportional to energy density in this context, the inequality
equivalently states that nonnegative energy density implies
nonnegative total mass.  It also states that there is a unique zero
total mass metric with nonnegative energy density, namely the flat
metric on $\real^n$.

\begin{RPMI}
Let $(M^n,g)$, $n \ge 3$, be a complete, asymptotically flat, smooth
$n$-manifold with nonnegative scalar curvature and total mass $m$.
Then
\begin{equation*}
    m\geq 0
\end{equation*}
with equality if and only if $(M^n,g)$ is isometric to $\mathbb R^n$
with the standard flat metric.
\end{RPMI}

Schoen and Yau surprised the relativity world in 1979 when they
proved the Riemannian positive mass inequality in dimension three
\cite{PMT1} because the techniques they used were based on existence
and stability properties of minimal surfaces.  A nice discussion of
these techniques, as well as related results including a fascinating
discussion of the hyperbolic version of the positive mass theorem,
is provided in \cite{GallowaySurvey}, also in this book dedicated to
Rick Schoen's 60th birthday. In \cite{PMT2}, Schoen and Yau extended their argument
using Jang's equation to prove the more general positive mass inequality for a generic slice of
a spacetime, in dimension three.  In addition, Schoen and Yau showed
that their argument extended to dimensions less than eight by using
a minimal hypersurface argument which is inductive on dimension
\cite{SchoenYauCompact, SchoenVariational}.  The higher dimension Jang equation argument which
proves the higher dimensional positive mass inequality in dimensions less than eight
is treated very carefully in
\cite{EichmairJang37}, following the method suggested by \cite{PMT2}.
However, another initially surprising fact is that minimal
hypersurfaces may have singularities on a subset whose codimension
has been proven to be at least seven
\cite{SimonGMT,FedererGMT,SimonsCone1,SimonsCone2}, which prevented
\cite{SchoenYauCompact,SchoenVariational} from proving the Riemannian positive mass inequality for
manifolds in dimension eight and higher. However, in 2009 Schoen has
been giving talks \cite{SimonsTalk-Schoen} about an approach to
overcoming this obstacle by trying to show that the minimal
hypersurface singularities can be sufficiently controlled. If this
approach is ultimately successful, this would prove the Riemannian
positive mass inequality in all dimensions.

In addition, Lohkamp has studied a somewhat different way of
modifying the original Schoen-Yau minimal surface argument.  He
announced a proof of the positive mass inequality in all dimensions
around or before 2004 which he describes in a 2006 preprint
\cite{LohkampPMT}. The techniques involved have been further expanded
upon in followup preprints \cite{Lohkamp1,Lohkamp2}.

Meanwhile, Witten's 1981 proof \cite{Witten, ParkerTaubes} of the
positive mass inequality works in all dimensions, but only for
manifolds which are spin.  So far no one has been able to get around
this extra topological assumption using a spinor approach.

Also, Lam's results \cite{LamThesis,LamPaper}, described at the end of the
previous section, work in general dimension and imply both the
Riemannian positive mass inequality and the Riemannian Penrose
inequality for certain manifolds which can be realized as graphs
over $\real^n$.  Naturally this assumption is very restrictive, but
the results are very good for helping to develop intuition.

Finally, in dimension three, the inverse mean curvature flow
techniques of Huisken and Ilmanen \cite{HuiskenIlmanen} prove the
Riemannian positive mass inequality by starting the inverse mean
curvature flow on a surface which is a small geodesic sphere around
a fixed point, in the limit as the radius of the geodesic sphere
goes to zero.  In fact, Lemma 8.1 of \cite{HuiskenIlmanen} proves
that there is a solution to inverse mean curvature flow starting at
a point, from which the Riemannian positive mass inequality in
dimension three follows.  Huisken and Ilmanen show that their
inverse mean curvature flow exists in dimensions less than eight.
However, the monotonicity of the Hawking mass of the surfaces in the
flow, a critical component of their proof of the Riemannian positive mass
inequality in dimension three, is only always satisfied in dimension
three. It is a fascinating open question whether or not this type of
approach could work in dimensions greater than three.

%\subsubsection{Positive Scalar Curvature on Compact Manifolds}

In \cite{LohkampCompact}, Lohkamp makes a nice connection between
asymptotically flat manifolds and compact manifolds in the context
of scalar curvature by proving the following:

\begin{lemma}\cite{LohkampCompact}
Let $(M^n,g)$ be a complete, asymptotically flat, smooth manifold,
and let $\tilde{M}^n$ be the compact smooth manifold which results
from compactifying $M^n$ by adding a point at infinity.  If $T^n \#
\tilde{M}^n$ does not admit a metric of positive scalar curvature,
then the Riemannian positive mass inequality is true for $(M^n,g)$.
\end{lemma}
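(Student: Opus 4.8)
The plan is to prove the contrapositive: I will show that any failure of the Riemannian positive mass inequality for $(M^n,g)$ produces a metric of positive scalar curvature on $T^n \# \tilde{M}^n$, contradicting the hypothesis. So suppose $(M^n,g)$ is complete, asymptotically flat, with $R \geq 0$, and suppose first that its total mass satisfies $m < 0$. I would begin by normalizing the asymptotics: by the perturbation result underlying Definition \ref{def:schwarzatinfinity}, I may change $g$ by an arbitrarily small amount that preserves $R \geq 0$ and changes $m$ by less than $|m|/2$, so that $g$ becomes exactly Schwarzschild at infinity while its mass stays negative. The end is then conformally flat with radial factor $u(r) = 1 + m/(2r^{n-2})$, and this perturbation leaves the diffeomorphism type of $\tilde{M}^n$ unchanged.

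The heart of the construction is to deform $g$ on this end so that it becomes \emph{exactly Euclidean} outside a large ball while retaining $R \geq 0$. Working in the conformally flat model and using $R = -\frac{4(n-1)}{n-2}\, u^{-(n+2)/(n-2)}\,\Delta u$, the condition $R \geq 0$ is equivalent to $u$ being superharmonic, i.e. for a radial profile $\Delta u = u'' + \tfrac{n-1}{r}u' \leq 0$. I replace $u$ for $r \geq r_1$ by a radial interpolation $\tilde u$ that agrees with $u$ on $r \leq r_1$ and equals the constant $1$ for $r \geq r_2$. Because $m < 0$, the harmonic profile has strictly \emph{positive} slope $u'(r) = (n-2)|m|/(2r^{n-1})$, so I can bend $\tilde u$ down to zero slope across $[r_1,r_2]$, choosing $\tilde u' \geq 0$ decreasing to $0$ at least as fast as the harmonic rate $r^{-(n-1)}$, which forces $\tilde u'' + \tfrac{n-1}{r}\tilde u' \leq 0$ with strict inequality on a subinterval. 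Thus the deformed metric has $R \geq 0$ everywhere and $R > 0$ on the transition annulus, and is flat outside $B_{r_2}$. \textbf{This is the step where the sign $m<0$ is essential}---the negative mass supplies exactly the positive slope I bend away---and making this deformation and its scalar-curvature sign fully rigorous is the main obstacle; note the negative mass is entirely ``spent'' here, the new end being flat.

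With the metric exactly flat on $\mathbb R^n \setminus B_{r_2}$, I would compactify by choosing a large cube inside the flat region and identifying opposite faces, yielding a closed manifold diffeomorphic to $T^n \# \tilde{M}^n$. Since all gluing happens where the metric is flat, the resulting closed metric $h$ has $R_h \geq 0$ everywhere and $R_h > 0$ on the transition annulus, hence $R_h \not\equiv 0$. A standard conformal deformation then finishes this case: the first eigenvalue of the conformal Laplacian $-\frac{4(n-1)}{n-2}\Delta_h + R_h$ is strictly positive when $R_h \geq 0$ and $R_h \not\equiv 0$, so the conformal metric built from its first eigenfunction has $R > 0$ everywhere on $T^n \# \tilde{M}^n$, contradicting the hypothesis. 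Therefore $m \geq 0$.

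For the rigidity statement, suppose $m = 0$. If $R > 0$ at some point, I would apply a compactly supported conformal deformation that strictly lowers the mass below zero while preserving $R \geq 0$, contradicting the inequality $m \geq 0$ just established; hence $R \equiv 0$. Showing that $R \equiv 0$ together with $m = 0$ forces $(M^n,g)$ to be isometric to flat $\mathbb R^n$ is the most delicate part: here $m = 0$ is a strict interior minimum, and I would deform $g$ along a non-flat direction to produce negative mass unless $(M^n,g)$ is already Ricci flat, whereupon asymptotic flatness forces it to be flat $\mathbb R^n$.
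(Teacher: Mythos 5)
Your proposal is correct and follows essentially the same route as the paper: flatten the end while preserving $R \ge 0$ (the paper invokes either Lohkamp's construction or, exactly as you do, the Schwarzschild-at-infinity perturbation followed by a radial ``curving up'' of the negative-mass end), identify opposite faces of a cube in the flat region to land on $T^n \# \tilde{M}^n$, conformally spread the leftover strictly positive scalar curvature to contradict the hypothesis, and handle $m=0$ by a mass-decreasing deformation together with the fact that Ricci-flat plus asymptotically flat implies flat. One cosmetic point: the superharmonicity constraint $\tilde u' (r) \le u'(r_1)(r_1/r)^{n-1}$ forces your interpolated profile to level off at a constant $c<1$ rather than at $1$ exactly, but a constant conformal factor still yields a flat end, which is all the cube identification requires.
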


To be clear, we mean to say that if $(M^n,g)$ has nonnegative scalar
curvature, then its total mass is nonnegative.  The case of equality
when the total mass is zero is also true by a short time Ricci flow / conformal change
argument \cite{PMT1, Bartnik} which we do not
present here.

Hence, the Riemannian positive mass inequality may be approached by
studying topological obstructions to compact manifolds admitting
metrics of positive scalar curvature.  We refer the reader to
\cite{GallowaySurvey} for more discussion. Besides this topological
question being interesting in its own right, the Riemannian positive
mass theorem in all dimensions would follow from proving that $T^n
\# M^n$ does not admit a metric of positive scalar curvature for any
compact $M^n$.

Lohkamp's lemma may be proved as follows.  Let $(M^n,g)$ be
complete, smooth, and asymptotically flat with nonnegative scalar
curvature. Suppose that the total mass $m < 0$.  Then Lohkamp shows
that the metric $g$ may be perturbed to a new metric $\tilde{g}$
while preserving nonnegative scalar curvature such that outside a
bounded set $\tilde{g}$ is flat. Choosing a cube which contains this
bounded set and identifying the opposite sides produces a metric
$\tilde{\tilde{g}}$ on $T^n \# \tilde{M}^n$ with nonnegative scalar
curvature. Furthermore, the perturbation process produces a region
where the scalar curvature of $\tilde{\tilde{g}}$ is strictly
positive. Multiplying $\tilde{\tilde{g}}$ through by the correct
conformal factor close to one spreads out this positive scalar
curvature so that the new conformal metric has positive scalar
curvature everywhere.  But $T^n \# \tilde{M}^n$ does not admit a
metric of positive scalar curvature, so we have a contradiction.
Hence, we can not have $m < 0$, so $m \ge 0$.

For the case of equality, suppose $m = 0$.  Then the previously
mentioned short time Ricci flow / conformal change argument \cite{PMT1} decreases the total mass to
being negative unless the manifold is Ricci flat.  The fact that Ricci flat plus asymptotically
flat implies flat is in \cite{Bartnik}.  Since the total mass can not be negative by the above
paragraph, it must be that $(M^n,g)$ is isometric to the flat metric
on $\real^n$.

The key step in the above proof is to take the asymptotically flat
metric $g$ with negative total mass and to perturb it while
preserving nonnegative scalar curvature to get the metric
$\tilde{g}$ which is flat outside a bounded set.  Besides Lohkamp's
construction \cite{LohkampCompact}, one can also use one of the author's
thesis results \cite{BrayThesis} (based on the harmonically flat perturbation result
\cite{PMT2} of Schoen and Yau) that allows an asymptotically flat
manifold to be perturbed to being Schwarzschild at infinity while
preserving nonnegative scalar curvature and changing the total mass
as little as one likes, so that in this case the total mass stays
negative. Since Schwarzschild is spherically symmetric, the metric
can then be ``curved up'' to being precisely flat outside a bounded
set in the space of spherically symmetric metrics with nonnegative
scalar curvature, which is most
easily seen by considering the Hawking masses of the spherically
symmetric spheres.

\subsection{The Riemannian Penrose Inequality}\label{RPI}

The Riemannian Penrose inequality is another elegant statement
which, like the Riemannian positive mass inequality, reveals the
global effect of nonnegative scalar curvature on an asymptotically
flat manifold, but where now there may be an interior boundary.
Conditions imposed on this interior boundary allow each connected
component of the boundary to be interpreted physically as the
apparent horizon of a black hole.  As before, scalar curvature is
proportional to energy density at each point.  Now suppose that we
define the mass contributed by this collection of black holes in
terms of the $(n-1)$ volume of the boundary.  In doing so, we may
interpret the Riemannian Penrose inequality as the physical
statement that nonnegative energy density implies that the total
mass is at least the mass contributed by the black holes.

\begin{RPI}
Let $(M^n,g)$, $n \ge 3$, be a complete, asymptotically flat, smooth
$n$-manifold with nonnegative scalar curvature, total mass $m$, and
a strictly outerminimizing smooth minimal boundary which is compact
(but not necessarily connected) of total $(n-1)$ volume $A$. Then
\begin{equation}
    m\geq \frac12 \left( \frac{A}{\omega_{n-1}} \right)^{\frac{n-2}{n-1}}
\end{equation}
with equality if and only if $(M^n,g)$ is isometric to the region of
a Schwarzschild metric of positive mass outside its minimal
hypersurface.
\end{RPI}

In the above statement, $\omega_{n-1}$ is the measure of the unit
$(n-1)$ sphere in $\real^n$.  A minimal boundary is one which has
zero mean curvature.  Finally, strictly outerminimizing means that
all other smooth hypersurfaces in the same homology class have
greater $(n-1)$ volume.

We hasten to add that it may be of considerable interest and
importance to study versions of the above statement where the
smoothness of the boundary is removed, and the notion of being
minimal is revisited. For example, in dimensions eight and higher,
minimizing hypersurfaces may have singularities on a subset whose
codimension has been proven to be at least seven
\cite{SimonGMT,FedererGMT,SimonsCone1,SimonsCone2}, so a more
general notion of allowable boundaries could be very important.

The Riemannian Penrose inequality was effectively conjectured by
Penrose \cite{Penrose} in 1973.  The inequality was proved in
dimension less than eight \cite{BrayLee} in 2007 by Lee and the
author (both former students of Rick Schoen, it should be noted in
this birthday volume) by generalizing the conformal flow of metrics method
originally used in the author's 1999 proof of the inequality in
dimension three \cite{BrayPenrose}.  The difficulty in pushing the
argument through to dimensions greater than seven is, once again,
that minimal hypersurfaces may have singularities on a subset whose
codimension has been proven to be at least seven
\cite{SimonGMT,FedererGMT,SimonsCone1,SimonsCone2}. Proving the
Riemannian Penrose inequality in dimensions greater than seven is an excellent
open problem.

We refer the reader to \cite{BrayNotices} for a general discussion
of the conformal flow of metrics technique.  The basic idea is to
flow the starting manifold $(M^n,g)$ to a Schwarzschild metric while
preserving nonnegative scalar curvature in
such a way that the $(n-1)$ volume of the boundary remains fixed and
the total mass is nonincreasing.  The Riemannian Penrose inequality
is defined to give equality for the Schwarzschild metrics, so the
inequality for the original starting manifold follows.  The
surprising fact is that it is possible to do this flow inside the
conformal class of the original manifold, which could even have a
different topology than the Schwarzschild metric.  The solution is
to allow the boundary to be a moving boundary, where in the limit as
the flow time goes to infinity, the moving boundary (which sometimes
jumps over regions and topology) goes to infinity as well,
eventually enclosing any bounded set. Another important
characteristic of the conformal flow of metrics approach is that the
monotonicity of the total mass is, quite beautifully, a direct and
yet nontrivial consequence of the Riemannian positive mass theorem,
as described in \cite{BrayPenrose,BrayNotices}.

An earlier breakthrough on the Riemannian Penrose inequality was due
to Huisken and Ilmanen \cite{HuiskenIlmanen} in 1997 using a
technique called inverse mean curvature flow, which proved the
inequality in dimension three when the minimal boundary is connected
and outermost (not enclosed by any other minimal surface). Inverse
mean curvature flow is of great independent interest as well. In the
seventies, Geroch \cite{Geroch} and Jang and Wald \cite{JangWald}
proposed this novel approach to address the Riemannian Penrose
inequality by flowing surfaces with speed equal to the reciprocal of
their mean curvatures at each point. Amazingly, there is an explicit
quantity called the Hawking mass which is nondecreasing under this
flow, in dimension three, and which equals the right and left sides
of the Riemannian Penrose inequality at $t=0$ and $t=\infty$,
respectively. However, Geroch, Jang, and Wald did not prove a
general existence theory for this flow, which is understandable
because there are examples of manifolds and initial surfaces where
it is known that smooth solutions to inverse mean curvature flow do
not exist. Hence, it was quite surprising and a true breakthrough in
1997 when Huisken and Ilmanen found a weak notion of inverse mean
curvature flow which allowed the surfaces to occasionally jump while
still preserving the monotonicity of the Hawking mass
\cite{HuiskenIlmanen}.

The monotonicity of the Hawking mass turns out to rely on the two
dimensional Gauss-Bonnet formula for the surfaces in the flow which
need to have Euler characteristic not exceeding two.  Consequently,
the argument so far only works in dimension three.  It is a
fascinating open question whether or not this type of approach could
work in dimensions greater than three. However, by a result due to
Meeks, Simon, and Yau \cite{MSY}, the exterior region outside the
outermost minimal surface of an asymptotically flat 3-manifold is,
quite remarkably, always diffeomorphic to $\real^3$ minus a finite
number of disjoint closed balls.  Huisken and Ilmanen start their
inverse mean curvature flow on the boundary of one of these balls so
that the initial Euler characteristic equals two, and then use the
trivial topology of the exterior region to guarantee that the
surfaces resulting from their weak inverse mean curvature flow stay
connected, and hence continue to have Euler characteristic not
exceeding two.  The Riemannian Penrose inequality follows, where $A$
is the area of any of the connected components of the outermost
minimal surface of the manifold. We refer the reader to
\cite{HuiskenIlmanen,BrayNotices} for more discussion of
inverse mean curvature flow.

In dimension greater than eight,
Lam's proof of the Riemannian Penrose inequality for manifolds which are graphs over $\real^n$
is all of
the cases where the Riemannian Penrose inequality is known to be
true, as far as the author is aware.
Previously the inequality was only known in the spherically symmetric case in
these higher dimensions, which may also be realized as graphs in Lam's examples.

To push the physical interpretation of the Riemannian Penrose
inequality a little further, let $\Sigma_{BH}$ be the minimal
boundary with connected components $\{\Sigma_{BH}^i\}$, each of
which we will think of as the apparent horizon of a black hole
(which is the case when $\Sigma$ is the outermost minimal
hypersurface).  Then if we define the mass contributed by this
collection of black holes to be
\begin{equation}\label{eqn:bh}
m_{BH} = \frac12 \left( \frac{|\Sigma_{BH}|}{\omega_{n-1}}
\right)^{\frac{n-2}{n-1}},
\end{equation}
where $|\Sigma_{BH}|$ is the $(n-1)$ volume of the boundary
$\Sigma_{BH}$, then the Riemannian Penrose inequality may be stated
as
\begin{equation}
   m \ge m_{BH},
\end{equation}
that the total mass is at least the mass contributed by the black
holes.

We also refer the reader to \cite{Mars} for Marc Mars's excellent
survey of results on the Penrose inequality, a more general
statement about spacelike hypersurfaces $M^n$ of spacetimes
$(N^{n+1},g_N)$ involving the second fundamental form $k$ and the
induced metric $g$ on $M^n$.  We note that when the second
fundamental form $k$ is zero, the relevant data is $(M^n,g)$, a
Riemannian manifold. This observation inspired Huisken and Ilmanen
\cite{HuiskenIlmanen} to refer to the zero second fundamental form
case of the Penrose inequality as the Riemannian Penrose inequality,
by which it has been known ever since. The full Penrose inequality,
or Penrose conjecture, is a very important open problem since it
says something very interesting about the geometries of spacetimes.
In \cite{BrayKhuri1,BrayKhuri2}, Khuri and the author propose an approach to
reduce the full Penrose inequality (which is still wide open) to the
Riemannian Penrose inequality (which has been proven in dimensions
less than eight). The key step is the derivation of a new identity,
called the generalized Schoen-Yau identity, which is a nontrivial generalization of an
identity first proved by Schoen and Yau in their proof of the
positive mass inequality \cite{PMT2}.  We stop here because, while
it would have been very tempting to survey recent progress on the
Penrose inequality for this volume, Marc Mars's recent survey
\cite{Mars} makes this completely unnecessary.

\subsection{The Riemannian ZAS Inequality}

The Riemannian ZAS inequality is the last of our trio of
inequalities.  Like the Riemannian Penrose inequality and the
Riemannian positive mass inequality, the Riemannian ZAS inequality
describes the global effect of nonnegative scalar curvature on an
asymptotically flat manifold.  The new feature, however, is that our
manifold is now allowed to have a finite number of isolated
singularities.  In \cite{BrayJauregui}, Jauregui and the author
define a notion of the mass contributed by a finite collection of
zero area singularities (ZAS), which is very general. The total mass
of an asymptotically flat manifold with nonnegative scalar curvature
is then shown to be at least the mass contributed by the
singularities, whenever a certain geometric conjecture is true.  We
refer the interested reader to \cite{BrayJauregui} for the details
but survey some of the main points from that paper here.  We comment
that \cite{BrayJauregui} focused on $n=3$, whereas we are presenting
the general dimension case here (with Jauregui's help), which is a
natural generalization.  We also comment that of the three
inequalities, the Riemannian ZAS inequality is the least well
understood with many excellent problems left open and plenty of
interesting directions to explore.

Another interesting point is that the Riemannian ZAS inequality is
primarily motivated by geometric considerations as opposed to
physical ones.  While well established physical reasoning led to the
conjecturing of the positive mass inequality and the Penrose
inequality, the Riemannian ZAS inequality, as we pose it here, has
simply been defined to be what can be proved mathematically so far.
In fact, a precise statement of what the ZAS inequality should be
exactly has not yet been established beyond the Riemannian case
(although there are some natural candidates), and even the
Riemannian case is not perfectly clear. Hence, it is not yet evident
what the physical implications of this inequality, if any, will be,
although the discussion of singularities in general relativity
is well underway \cite{Gibbons,Dotti}.

As promised, the Schwarzschild metrics with negative mass are cases
of equality of the Riemannian ZAS inequality.  Metrically, these
singularities look like a cusp at a single point, as in figure
\ref{NegativeSchwarzschild}.  Note that the singularity is not
roughly conical in the usual sense as the picture suggests since the
background metric is Minkowski space and the hypersurface is
becoming null at the singularity. Also, referring back to equation
\ref{eqn:conformalform}, one sees that the singularity of a negative
mass Schwarzschild metric occurs on the coordinate sphere of radius
$r = (|m|/2)^\frac{1}{n-2} > 0$ since the conformal factor goes to
zero on this sphere. In fact, it is useful to think of these
singularities as surfaces with zero area.

\begin{definition} \cite{BrayJauregui}
Let $M^n$ be a smooth $n$-manifold with smooth compact boundary
$\partial M$ and let $g$ be a complete, smooth, asymptotically flat metric
on $M \backslash
\partial M$.  A connected component $\Sigma$ of $\partial M$ is a
\textbf{zero area singularity (ZAS)} of $(M^n,g)$ if for every
sequence of surfaces $\{S_n\}$ converging in $C^1$ to $\Sigma$, the
areas of $S_n$ measured with respect to $g$ converge to zero.
\end{definition}

In this section we will consider the case when every connected
component of $\partial M$ is a ZAS, so we define $\Sigma_{ZAS} =
\partial M$.  As one might expect, not all singularities have equally bad
behavior, even when curvatures are blowing up to infinity. In
\cite{BrayJauregui}, notions of regular ZAS, harmonically regular
ZAS, and globally harmonically regular ZAS are defined.

\begin{definition} \cite{BrayJauregui}
Let $\Sigma$ be a ZAS of $(M^n,g)$.  Then $\Sigma$ is
\textbf{regular} if there exists a smooth, nonnegative function
$\overline\varphi$ and a smooth metric $\overline{g}$, both defined
on a neighborhood $U$ of $\Sigma$ (which of course includes
$\Sigma$), such that

(1) $\overline\varphi$ vanishes precisely on $\Sigma$,

(2) $\overline\nu(\overline\varphi) > 0$ on $\Sigma$, where
$\overline\nu$ is the inward unit normal to $\Sigma$ with respect to
$\overline{g}$,

(3) $g = \overline\varphi^{\,4/(n-2)} \overline{g}$ on $U \backslash
\Sigma$.

\noindent If such a pair $(\overline{g},\overline\varphi)$ exists,
it is called a \textbf{local resolution} of $\Sigma$.
\end{definition}

When $\overline\varphi$ is also harmonic with respect to
$\overline{g}$, then the ZAS is defined to be harmonically regular
with a local harmonic resolution. Finally, if all of the connected
components of $\partial M$ are harmonically regular ZAS with the
same $\overline\varphi$, where $U$ is the entire manifold, then all
of the ZAS are defined to be globally harmonically regular with a
global harmonic resolution. Note that the negative mass
Schwarzschild metric singularity is globally harmonically regular
(and hence harmonically regular and regular), as can be seen from
equation \ref{eqn:conformalform}.  However, there are regular ZAS
which are not harmonically regular (section 4.4 of
\cite{BrayJauregui}).

These notions of regular ZAS are useful because generic ZAS may be
approximated by them, as described in \cite{BrayJauregui}. In doing
so, the definition of the mass of a regular ZAS may be extended to
give a definition of the mass of a generic ZAS.  We refer the reader
to \cite{BrayJauregui} for these and other discussions.

\begin{definition}
Suppose $\Sigma$ is a regular ZAS of $(M^n,g)$ with local resolution
$(\ol g, \ol \varphi)$.  We define the \textbf{regular mass} of
$\Sigma$ to be
\begin{equation}\label{eqn:defregmass}
m_{reg}(\Sigma) = - \frac{2}{(n-2)^2} \left(\frac{1}{\omega_{n-1}}
\int_\Sigma ({\ol \nu} (\ol \varphi))^{\frac{2(n-1)}{n}} \;\ol{dA}
\right)^{\frac{n}{n-1}},
\end{equation}
where ${\ol \nu}$ is the inward unit normal to $\Sigma$ with respect
to $\ol g$, $\ol{dA}$ is the $(n-1)$ volume form of $\ol g$, and
$\omega_{n-1}$ is the $(n-1)$ volume of the unit $(n-1)$ sphere in
$\real^n$.
\end{definition}

Again, this definition of the mass of a regular ZAS was chosen
primarily because theorems can be proved about it.  Hence, other
definitions should also be considered.  However, in addition to
being able to prove theorems using this definition, it also gives
the correct answer in the case of a negative mass Schwarzschild
singularity, namely $-|m|$, is independent of the particular choice
of local resolution, and only depends on the local geometry near the
singularity. The local nature of this definition of the mass of a
regular ZAS is best seen by this next lemma (proposition 12 in
\cite{BrayJauregui}). In fact, one could take the following formula
to be the definition of the mass of a regular ZAS, if one prefers.
\begin{lemma}
   Suppose $\Sigma$ is a regular ZAS of $(M^n,g)$.  If
   $\{\Sigma_n\}$ is a sequence of surfaces converging to $\Sigma$
   in $C^2$, then
\begin{equation}\label{eqn:limit}
   m_{reg}(\Sigma) = \lim_{n \rightarrow \infty} - \frac{1}{2(n-1)^2} \left(\frac{1}{\omega_{n-1}}
\int_{\Sigma_n} H^{\frac{2(n-1)}{n}} \; dA \right)^{\frac{n}{n-1}},
\end{equation}
where $H$ is the mean curvature of $\Sigma_n$, and $dA$ is the
$(n-1)$ volume form on $\Sigma_n$.
\end{lemma}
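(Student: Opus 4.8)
The plan is to show that the mean-curvature formula (\ref{eqn:limit}) reduces to the regular-mass definition (\ref{eqn:defregmass}) by computing the mean curvature $H$ of a surface $\Sigma_n$ near $\Sigma$ in the physical metric $g$, expressing it in terms of the local resolution data $(\ol g, \ol\varphi)$, and taking the limit as $\Sigma_n \to \Sigma$. The crucial input is the definition of a regular ZAS: on a neighborhood $U$ we have $g = \ol\varphi^{\,4/(n-2)}\ol g$ with $\ol\varphi$ vanishing precisely on $\Sigma$ and $\ol\nu(\ol\varphi) > 0$ there. So the entire calculation is a conformal-change-of-mean-curvature computation, followed by a careful accounting of how the degenerating conformal factor behaves as one approaches $\Sigma$.

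First I would fix a surface $\Sigma_n$ close to $\Sigma$ and recall the standard formula for how mean curvature transforms under a conformal change $g = \ol\varphi^{\,4/(n-2)}\ol g$. If $H$ and $\ol H$ denote the mean curvatures of $\Sigma_n$ with respect to $g$ and $\ol g$ respectively, and $\ol\nu$ is the inward $\ol g$-unit normal, the conformal transformation law in dimension $n$ gives $H = \ol\varphi^{-2/(n-2)}\bigl(\ol H + \tfrac{2(n-1)}{n-2}\,\ol\varphi^{-1}\,\ol\nu(\ol\varphi)\bigr)$. As $\Sigma_n \to \Sigma$ in $C^2$, the conformal factor $\ol\varphi \to 0$, so the dominant term in $H$ is the singular one involving $\ol\nu(\ol\varphi)/\ol\varphi$. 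The bounded term $\ol H$ and the overall $\ol\varphi^{-2/(n-2)}$ prefactor combine to give $H \approx \tfrac{2(n-1)}{n-2}\,\ol\varphi^{-n/(n-2)}\,\ol\nu(\ol\varphi)$ to leading order near $\Sigma$.

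Next I would substitute this asymptotic expression for $H$ into the integral in (\ref{eqn:limit}) and relate the $g$-area form $dA$ on $\Sigma_n$ to the $\ol g$-area form $\ol{dA}$. Since $g = \ol\varphi^{\,4/(n-2)}\ol g$ and $\Sigma_n$ is an $(n-1)$-dimensional surface, the induced area forms satisfy $dA = \ol\varphi^{\,2(n-1)/(n-2)}\,\ol{dA}$. Raising $H$ to the power $2(n-1)/n$ and multiplying by $dA$, I would check that the powers of $\ol\varphi$ cancel exactly: the $\ol\varphi^{-n/(n-2)}$ inside $H$ contributes $\ol\varphi^{-2(n-1)/(n-2)}$ after the exponentiation, which precisely cancels the $\ol\varphi^{\,2(n-1)/(n-2)}$ from the area form. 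This is the decisive cancellation that makes the limit finite and forces the appearance of $(\ol\nu(\ol\varphi))^{2(n-1)/n}\,\ol{dA}$ in the integrand, matching (\ref{eqn:defregmass}). Finally, one collects the numerical constants—the factor $\tfrac{2(n-1)}{n-2}$ raised to $2(n-1)/n$, combined with the prefactors $-\tfrac{1}{2(n-1)^2}$ and $-\tfrac{2}{(n-2)^2}$—and verifies they agree after the outer exponent $n/(n-1)$ is applied.

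The main obstacle I anticipate is making the limiting argument rigorous rather than merely formal: the computation above is asymptotic, so I would need to control the error terms (the contribution of $\ol H$ and of the $\ol\varphi^{-2/(n-2)}\ol H$ piece) and show they vanish in the limit after integration and exponentiation. Because $\ol\varphi \to 0$ while $\ol\nu(\ol\varphi)$ stays bounded away from zero on $\Sigma$ (condition (2) of regularity), the singular term genuinely dominates, but one must justify uniform convergence over $\Sigma_n$ and that the $C^2$ convergence of $\Sigma_n$ to $\Sigma$ is strong enough to pass the limit inside the integral. Verifying that the constants match up exactly is routine bookkeeping but error-prone, so I would double-check the conformal mean-curvature law and the exponent arithmetic with particular care.
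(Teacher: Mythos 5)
Your argument is correct and is essentially the proof of this lemma (the paper itself does not reproduce it, deferring to Proposition 12 of \cite{BrayJauregui}, but that proof proceeds by exactly this conformal computation). The key points all check out: the conformal law $H = \ol\varphi^{-2/(n-2)}\bigl(\ol H + \tfrac{2(n-1)}{n-2}\ol\varphi^{-1}\ol\nu(\ol\varphi)\bigr)$ gives a dominant term of order $\ol\varphi^{-n/(n-2)}$, whose power of $\ol\varphi$ after raising to $\tfrac{2(n-1)}{n}$ exactly cancels the factor $\ol\varphi^{2(n-1)/(n-2)}$ in $dA = \ol\varphi^{2(n-1)/(n-2)}\,\ol{dA}$, and the constants match since $\tfrac{1}{2(n-1)^2}\bigl(\tfrac{2(n-1)}{n-2}\bigr)^{2} = \tfrac{2}{(n-2)^2}$. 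Your identification of the remaining work (uniform control of the $\ol H$ error term using condition (2) of regularity and the $C^{2}$ convergence, which also guarantees $H>0$ near $\Sigma$ so the fractional power is well defined) is exactly what is needed to make the limit rigorous.
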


We define the mass contributed by a collection of regular ZAS using
equations \ref{eqn:defregmass} and \ref{eqn:limit}, where $\Sigma =
\Sigma_{ZAS} = \partial M$ may now have multiple components.  That
is, we define
\begin{equation}\label{eqn:zas}
   m_{ZAS} = m_{reg}(\Sigma_{ZAS}).
\end{equation}
Again, we refer the reader to \cite{BrayJauregui} for the case of
generic ZAS.  In all cases, the goal is to prove that $m \ge
m_{ZAS}$, that is, that the total mass of a complete, asymptotically
flat, smooth $n$-manifold with smooth boundary, each connected
component of which is a ZAS, is at least the mass contributed by the
ZAS.

\begin{RZASI}
Let $(M^n,g)$, $n \ge 3$, be a complete, asymptotically flat, smooth
$n$-manifold with nonnegative scalar curvature, total mass $m$, and
nonempty compact boundary $\partial M$ (smooth with respect to the
smooth structure), each connected component of which is a ZAS. Then
\begin{equation}
   m \ge m_{ZAS}
\end{equation}
with equality if and only if $(M^n,g)$ is isometric to a
Schwarzschild metric of negative mass.
\end{RZASI}

The Riemannian ZAS inequality in dimension three
is proven in \cite{BrayJauregui} whenever a certain geometric
conjecture, called the conformal conjecture, is true.  The proof in \cite{BrayJauregui} generalizes
to dimensions less than eight without any significant changes.  We state the
conformal conjecture more generally than in \cite{BrayJauregui} to emphasize what is needed
to be true for the method in \cite{BrayJauregui} to work.

\begin{CC}
Let $(M^n,g)$, $n \ge 3$, be a complete, asymptotically flat, smooth
$n$-manifold with nonempty compact boundary $\Sigma = \partial M$
which is smooth with respect to the smooth structure and the metric
$g$. Then there exists a positive harmonic function $u(x)$ on
$(M^n,g)$ going to one at infinity such that if we let $\ol{g} =
u(x)^{4/(n-2)} g$ and $\tilde\Sigma$ be the outermost minimal
enclosure of $\Sigma$ in $(M^n,\ol{g})$, then

(1) $\tilde\Sigma$ and $\Sigma$ have the same $(n-1)$ volume in
$(M^n,\ol{g})$,

(2) $\tilde\Sigma$ has zero mean curvature, at least in some weak
sense compatible with the Riemannian Penrose inequality.
\end{CC}

The power of the conformal conjecture, when it is true, is that it allows one to apply the Riemannian
Penrose inequality to any asymptotically flat manifold with nonnegative scalar curvature and
compact boundary to gain some
information about the manifold.  If the original metric has
nonnegative scalar curvature, then since
\begin{equation}\label{eqn:confscalarcurv2}
   \ol{R} = u(x)^{-\frac{n+2}{n-2}}\left( -\frac{4(n-1)}{n-2} \Delta u + Ru \right),
\end{equation}
the new metric $\ol{g}$ will have nonnegative scalar curvature as well.  It turns out to be too much
to ask that the boundary $\Sigma$ satisfies the conditions of the Riemannian Penrose inequality (being
a strictly outerminimizing minimal surface).  Instead, the conjecture simply asks $\Sigma$ to have the
same $(n-1)$ volume as its outermost minimal enclosure $\tilde{\Sigma}$,
which is automatically strictly outerminimizing
by definition.  Hence, if the conjecture is satisfied, and $\tilde{\Sigma}$ has zero mean curvature,
at least in some weak sense compatible with the Riemannian Penrose inequality, then we can effectively
apply the Riemannian Penrose inequality to $\Sigma$, since it has the same $(n-1)$ volume as
$\tilde\Sigma$.  This inequality then gives information about the original manifold $(M^n,g)$,
for example proving the Riemannian ZAS inequality as described in \cite{BrayJauregui} (when combined
with some additional arguments), when the
ZAS are globally harmonically regular.  A process by which generic ZAS are approximated by
globally harmonically regular ZAS then proves the Riemannian ZAS inequality in dimension less
than eight, whenever the conformal conjecture is true.

We note that the conformal conjecture is true when $(M^n,g)$ is spherically symmetric, which is a good
exercise.  Also, a substantial amount of progress has been made on the conformal conjecture in
Jauregui's thesis \cite{JaureguiThesis},
which proves that a conformal factor may be chosen so that $\tilde\Sigma$ has zero
mean curvature off a set of measure zero.  As we commented before, it may be important to study
versions of the Riemannian Penrose inequality where the smoothness of the boundary is not required.
Finding the most general allowable boundary conditions for the Riemannian Penrose inequality
could be a very interesting and important open problem, potentially with implications for
the Riemannian ZAS inequality.

In the case of a single ZAS in dimension three, Huisken and Ilmanen's
inverse mean curvature flow proves the Riemannian ZAS inequality, as shown by Robbins in
\cite{Robbins, RobbinsThesis}.  However,
if there is more than one ZAS, then no conclusion results from inverse mean curvature flow.  The reason
is that the Hawking mass is only nondecreasing if the surfaces in the flow are connected.  Hence, the
flow must start at one of the ZAS.  However, when the surfaces flow over the other ZAS, the Hawking
mass will not be nondecreasing.  Once could hope that the inverse mean curvature flow might always
jump over the other ZAS, but this is not the case, since we know that the total mass can in fact be
less than the mass of any single ZAS.  More realistically, one could hope to estimate how
much the Hawking mass decreases when the surfaces flow over the other ZAS.  This could be an interesting
problem to study.  It is even possible that the surfaces in the flow get ``snagged'' on ZAS
for a positive amount of time in the flow when they pass over
ZAS, which would be another interesting phenomenon to study.

Also, when $(M^n,g)$ is isometric to a graph over $\real^n$ in $(n+1)$ dimensional Minkowski space,
Lam's methods \cite{LamThesis,LamPaper}
prove a version of the ZAS inequality when ZAS are present, as in figure \ref{NegativeSchwarzschild}.
This argument works for any number of ZAS in any dimension.

We comment that equations \ref{eqn:bh} and \ref{eqn:zas} combine to
motivate defining the quasi-local mass functional
\begin{equation}
  {\mathcal M}(S) = \frac12 \left( \frac{|S|}{\omega_{n-1}}
   \right)^{\frac{n-2}{n-1}}
   \;-\;\; \frac{1}{2(n-1)^2} \left(\frac{1}{\omega_{n-1}}
\int_{S} H^{\frac{2(n-1)}{n}} \; dA \right)^{\frac{n}{n-1}}
\end{equation}
for a compact hypersurface $S$, where $|S|$ is the $(n-1)$ volume of
$S$, $H$ is the mean curvature of $S$, and $dA$ is the $(n-1)$
volume form on $S$.  The virtue of this quasi-local mass functional
${\mathcal M}$ is that on a collection of minimal hypersurfaces (where $H=0$)
it gives the mass contributed by a collection of black holes as in
equation \ref{eqn:bh} and in the limit as $S$ converges to a
collection of ZAS in $C^2$ (where $|S|$ is going to zero) it gives
the mass contributed by a collection of ZAS as in equation
\ref{eqn:zas}.  Furthermore, ${\mathcal M}(S) = m$ for any of the spherically
symmetric spheres $S$ of a Schwarzschild metric of mass $m$. However, the author is not aware
of any flow which makes this functional nondecreasing.  The big open problem is to try to
generalize the successes of inverse mean curvature flow and the fact that it makes the Hawking mass
functional nondecreasing to general dimension and to surfaces which are not necessarily connected.

Another interesting conjecture to study, again motivated by equations \ref{eqn:bh} and \ref{eqn:zas},
is one which combines the Riemannian Penrose inequality with the Riemannian ZAS inequality, which we
will state as
\begin{equation}
m \ge m_{BH} + m_{ZAS}.
\end{equation}
In the above combined black hole ZAS conjecture,
we are assuming that every connected component of the boundary of the
smooth topological manifold $M^n$ is either a strictly outerminimizing minimal hypersurface
(corresponding to the apparent horizon of a black hole) or a ZAS.  Note that the masses of black holes
do not ``add'' in that there are examples of asymptotically flat manifolds where
\begin{equation}
   m < \frac12 \left( \frac{|\Sigma_{BH}^1|}{\omega_{n-1}}\right)^{\frac{n-2}{n-1}}
     + \frac12 \left( \frac{|\Sigma_{BH}^2|}{\omega_{n-1}}\right)^{\frac{n-2}{n-1}},
\end{equation}
where the boundary of the manifold is a strictly outerminimizing minimal hypersurface with
connected components $\Sigma_{BH}^1$ and $\Sigma_{BH}^2$.  Similarly, the masses of ZAS do not
``add'' either.  Essentially, the reason is related to the fact that the potential energy between two
objects in Newtonian physics is $-Gm_1m_2/r^{n-2}$, which is negative when the two masses have the same
sign.  However, this potential energy term is positive when the two masses have opposite signs.
Inspired by this, the above conjecture that the total mass is at least
the sum of the mass contributed by the collection
of black holes with the mass contributed by the collection of ZAS seems plausible, and hence
deserving of further study.  Note that another way to state this conjecture, using the quasi-local mass
function just defined, is as $m \ge {\mathcal M}(\partial M)$.  Conjecturally, the cases of equality
would be the Schwarzschild metrics of any mass, positive or negative.
In addition, the metric on $\real^n$ (minus some compact set) given by
\begin{equation}
   g_{ij} = \left(1 + \frac{m_1}{2 |x - x_1|^{n-2}}
                    + \frac{m_2}{2 |x - x_2|^{n-2}} \right)^\frac{4}{n-2} \delta_{ij}
\end{equation}
in the limit as $x_1, x_2 \in \real^n$ get infinitely far apart, are arbitrarily close to being
cases of equality, when $m_1$ and $m_2$ have opposite signs, corresponding to precisely one black hole
and one ZAS.  This last observation could be an important hint for approaching this combined
black hole ZAS conjecture.

The author would like to thank Michael Eichmair, Jeff Jauregui, George Lam, Dan Lee, and
Fernando Schwartz for helpful suggestions with this paper, all of whom join the author in wishing
Rick a very happy 60th birthday.

% ----------------------------------------------------------------

\end{document}